\theoremstyle{plain}
\newtheorem*{theoA}{Theorem A}
\newtheorem*{theoB}{Theorem B}
\newtheorem*{theoC}{Theorem C}
\newtheorem*{theoD}{Theorem D}
\newtheorem*{theoE}{Theorem E}
\newtheorem*{theoF}{Theorem F}
\newtheorem*{theoG}{Theorem G}
\newtheorem*{theoH}{Theorem H}
 \newtheorem{theo}{Theorem}[section]
 \newtheorem{lem}{Lemma}[section]
 \newtheorem{cor}{Corollary}[section]
\theoremstyle{definition}
 \newtheorem{exm}{Example}[section]
 \newtheorem{ques}{Question}[section]
 \newtheorem{defi}{Definition}[section]
 \newtheorem{note}{Note}[section]
\theoremstyle{remark}
 \newtheorem{rem}{Remark}[section]
 \newcommand{\ol}{\overline}
\newcommand{\be}{\begin{equation}}
\newcommand{\ee}{\end{equation}}
\newcommand{\beas}{\begin{eqnarray*}}
\newcommand{\eeas}{\end{eqnarray*}}
\newcommand{\bea}{\begin{eqnarray}}
\newcommand{\eea}{\end{eqnarray}}
\newcommand{\lra}{\longrightarrow}
 \numberwithin{equation}{section}
\renewcommand{\leq}{\leqslant}
\renewcommand{\geq}{\geqslant}
\renewcommand{\setminus}{\smallsetminus}
\title[On uniqueness of two meromorphic functions ...]{\LARGE On uniqueness of two meromorphic functions sharing three sets with finite weights}
\subjclass[2010]{ Primary 30D35.}
\keywords{ meromorphic function, uniqueness, shared sets, Gross question, weighted sharing.}
\numberwithin {equation}{section}
\date{}
\author{Molla Basir Ahamed}
\address{ Department of Mathematics, Kalipada Ghosh Tarai Mahavidyalya, West Bengal, 734014, India.}
\email{basir\_math\_kgtm@yahoo.com, bsrhmd117@gmail.com.}
\begin{document}
\vspace{18mm} \setcounter{page}{1} \thispagestyle{empty}

\begin{abstract}
With the help of the notion of weighted sharing of sets, this paper dealt with the question posed by \emph{Yi} \cite{Yi-SC-1994} regarding the uniqueness of meromorphic functions concerning three set sharing. A result has been proved which significantly improved the recent results of \emph{Banerjee - Ahamed} \cite{Ban & Aha-BPAS-2014}, \emph{Banerjee - Mukherjee} \cite{Ban & Muk-HJ-2008} and \emph{Banerjee - Majumder} \cite{Ban & Maj-A-2014} by relaxing the nature of sharing. Several examples have been exhibited to show the sharpness of the cardinalities of the sets $\mathcal{S}_1$ and $\mathcal{S}_2$ considered in \emph{Theorem \ref{t1.2}}\;. Moreover, we give some constructive examples to endorse the validity of our established theorem.
\end{abstract}
\maketitle

\section{\sc Introduction, Definitions and Results}
In this paper by a meromorphic function we will always mean a meromorphic function in the open complex plane. Let $f$ and $g$ be two non-constant meromorphic functions and let $a\in\mathbb{C}\cup\{\infty\}$. For standard definitions and notations of value distribution theory we refer to the reader to see \cite{Hay-1964}. We denote through out the paper $\mathbb{C^{*}}=\mathbb{C}\setminus\{0\}$.\par 
If $f$ and $g$ have the same set of $a$-points with same multiplicities then we say that $f$ and $g$ share the value $a$ $CM$ (Counting Multiplicities). If we do not take the multiplicities into account, $f$ and $g$ are said to share the value $a$ $IM$ (Ignoring Multiplicities).\par 

\begin{defi}
	For a non-constant meromorphic function $f$ and any set $\mathcal{S}\subset\mathbb{\ol C}$, we define \beas E_{f}(\mathcal{S})=\displaystyle\bigcup_{a\in\mathcal{S}}\bigg\{(z,p)\in\mathbb{C}\times\mathbb{N}:f(z)=a,\;\text{with multiplicity}\; p\bigg\}, \eeas \beas\ol E_{f}(\mathcal{S})=\displaystyle\bigcup_{a\in\mathcal{S}}\bigg\{(z,1)\in\mathbb{C}\times\{1\}:f(z)=a\bigg\}.\eeas
\end{defi}
\par If $E_{f}(\mathcal{S})=E_{g}(\mathcal{S})$ ($\ol E_{f}(\mathcal{S})=\ol E_{g}(\mathcal{S})$) then we simply say $f$ and $g$ share $\mathcal{S}$ Counting Multiplicities(CM) (Ignoring Multiplicities(IM)).\par
Evidently, if $\mathcal{S}$ contains one element only, then it coincides with the usual definition of $CM (IM)$ sharing of values. \par 

Next we explain some definitions and notations which will be used in the paper.
\begin{defi}\cite{Lah-Sar-2004} Let $p$ be a positive integer and $a\in\mathbb{C}\cup\{\infty\}$.\begin{enumerate}
		\item[(i)] $N\left(r,\displaystyle\frac{1}{f-a}\mid \geq p\right)$ $\left(\ol N\left(r,\displaystyle\frac{1}{f-a}\mid \geq p\right)\right)$ denotes the counting function (reduced counting function) of those $a$-points of $f$ whose multiplicities are not less than $p$.
		\item[(ii)] $N\left(r,\displaystyle\frac{1}{f-a}\mid \leq p\right)$ $\left(\ol N\left(r,\displaystyle\frac{1}{f-a}\mid \leq p\right)\right)$ denotes the counting function (reduced counting function) of those $a$-points of $f$ whose multiplicities are not greater than $p$.
	\end{enumerate} 
\end{defi}
\begin{defi}
	Let $f$ and $g$ be two non-constant meromorphic functions such that $f$ and $g$ share the value $a$ with weight $k$ where $a\in\mathbb{C}\cup\{\infty\}$. Let $ f $ and $ g $ have same $ a $-points with respective multiplicities $ p $ and $ q $. We denote by $\ol N_E^{(k+1}\left(r,\displaystyle\frac{1}{f-a}\right)$ the counting function of those $a$-points of $f$ and $g$ where $p=q\geq k+1$, each point in this counting function counted only once.  
\end{defi}
\begin{defi}\cite {Yi-1991} For $a\in\mathbb{C}\cup\{\infty\}$ and a positive integer $p$ we denote by \beas N_{p}\left(r,\frac{1}{f-a}\right)=\ol N\left(r,\frac{1}{f-a}\right)+\ol  N\left(r,\frac{1}{f-a}\mid\geq 2\right)+\ldots+\ol N\left(r,\frac{1}{f-a}\mid\geq p\right).\eeas\par It is clear that $N_{1}\left(r,\displaystyle\frac{1}{f-a}\right)=\ol N\left(r,\displaystyle\frac{1}{f-a}\right)$.
\end{defi}
\begin{defi}
	Let $N_{1)}\left(r,\displaystyle\frac{1}{f-a}\right)$ denote the counting function of the simple zeros of $f-a$ and $\ol N_{(2}\left(r,\displaystyle\frac{1}{f-a}\right)$ denote the reduced counting function of the $a$-points of $f$ of multiplicities $\geq 2$. It follows that \beas N_2\left(r,\frac{1}{f-a}\right)=N_{1)}\left(r,\displaystyle\frac{1}{f-a}\right)+2\ol N_{(2}\left(r,\displaystyle\frac{1}{f-a}\right).\eeas
\end{defi}
\begin{defi}\cite{Zha-2005} For a positive integer $p$ and $a\in\mathbb{C}\cup\{\infty\}$, we put \beas\delta_{p}(a;f)= 1- \limsup\limits _{r\lra \infty}\frac{N_{p}\left(r,\displaystyle\frac{1}{f-a}\right)}{T(r,f)}.\eeas  \beas\Theta(a;f)= 1- \limsup\limits _{r\lra \infty}\frac{\ol N\left(r,\displaystyle\frac{1}{f-a}\right)}{T(r,f)}\eeas 
	Clearly $0\leq \delta (a;f)\leq \delta _{p}(a;f)\leq \delta_{p-1}(a;f)\leq\ldots \leq\delta_{2}(a;f)\leq\delta_{1}(a;f)=\Theta (a;f)$. 
\end{defi}\par 
In $1926$, \emph{Nevanlinna} first showed that a non-constant meromorphic function on the complex plane $\mathbb{C}$ is uniquely determined by the pre-images, ignoring multiplicities, of $5$ distinct values (including infinity). A few years latter, he showed that when multiplicities are taken into consideration, $4$ points are enough and in that case either the two functions coincides or one is the bilinear transformation of the other
one.\par The uniqueness problem for entire or meromorphic functions sharing sets was initiated by a famous question of \emph{Gross} in \cite{Gross}. 
In $1976$, \emph{Gross} \cite{Gross} asked the following question.
\begin{ques}\label{qn1.1}
	Can one find two finite sets $\mathcal{S}_{j}, (j=1,2)$ such that any two non-constant entire functions $f$ and $g$ satisfying $E_{f}(\mathcal{S})=E_{g}(\mathcal{S})$, $(j=1,2)$ must be identical ?
\end{ques}\par In \cite{Gross}, \emph{Gross} said that if the answer of \emph{\sc Question \ref{qn1.1}} is affirmative it would be interesting to know how large both sets would have to be ?\par In $1994$, \emph{Yi} \cite{Yi-SC-1994} posed the following question.
\begin{ques}\label{qn1.2}
	Can one find three finite sets $\mathcal{S}_{j}, (j=1,2, 3)$ such that any two non-constant meromorphic functions $f$ and $g$ satisfying $E_{f}(\mathcal{S})=E_{g}(\mathcal{S})$, $(j=1,2,3)$ must be identical ?
\end{ques} \par In the same paper \cite{Yi-SC-1994}, \emph{Yi} answered the \emph{\sc Question \ref{qn1.2}} affirmatively and obtained a result by showing that there exist three finite sets $\mathcal{S}_1$ (with $7$ elements), $\mathcal{S}_2$ (with $2$ elements) and $\mathcal{S}_3$ (with $1$ element) such that any two non-constant meromorphic functions $f$ and $g$ satisfying $E_{f}(\mathcal{S}_{j})=E_{g}(\mathcal{S}_{j})$, $(j=1,2,3)$ must be identical.\par 
In the direction of \emph{\sc Question \ref{qn1.2}}, \emph{Fang - Xu} \cite{Fan & Xu-CJCM-1997} obtained the following result.
\begin{theoA}\cite{Fan & Xu-CJCM-1997}
	Let $\mathcal{S}_1=\{0\}$, $\mathcal{S}_2=\{z:z^3-z^2-1=0\}$ and $\mathcal{S}_3=\{\infty\}$. Let $f$ and $g$ be two non-constant meromorphic functions such that $\Theta(\infty;f)>\displaystyle\frac{1}{2}$ and $\Theta(\infty;g)>\displaystyle\frac{1}{2}$. If $E_{f}(\mathcal{S}_{j})=E_{g}(\mathcal{S}_{j})$, for $j=1,2,3$ then $f\equiv g$.    
\end{theoA}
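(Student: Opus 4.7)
My plan is to exploit the CM sharing hypotheses to express $f$ and $g$ in terms of two entire exponential factors, reduce the problem to a single cubic algebraic identity, and then rule out every non-trivial possibility. Since $f$ and $g$ share $\{0\}$ and $\{\infty\}$ CM, the ratio $f/g$ is a meromorphic function with neither zeros nor poles, so $f=e^{\alpha}g$ for some entire $\alpha$. Since $f,g$ share $\mathcal{S}_{2}$ and $\{\infty\}$ CM, the zeros of $P(f)$ and $P(g)$ (with $P(z)=z^{3}-z^{2}-1$) coincide with multiplicities by the set-sharing condition, and the poles coincide with multiplicities because a pole of $f$ (respectively $g$) of order $m$ yields a pole of $P(f)$ (respectively $P(g)$) of order $3m$. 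Hence $P(f)=e^{\beta}P(g)$ for some entire $\beta$. Substituting $f=e^{\alpha}g$ into this relation produces the key identity
\[
(e^{3\alpha}-e^{\beta})\,g^{3}-(e^{2\alpha}-e^{\beta})\,g^{2}+(e^{\beta}-1)=0.
\]

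I would then split into cases according to which of the three coefficients vanish identically. If all three vanish, then $e^{\beta}\equiv 1$ together with $e^{2\alpha}\equiv e^{3\alpha}\equiv 1$ forces $e^{\alpha}\equiv 1$, so $f\equiv g$ as desired. If exactly one coefficient vanishes, elementary multiplicity obstructions dispatch two of the three resulting subcases. When $e^{\beta}\equiv e^{3\alpha}$ (with $e^{\alpha}\not\equiv 1$) the identity reduces to $g^{2}=-e^{-2\alpha}(e^{\alpha}-\omega)(e^{\alpha}-\omega^{2})$ with $\omega=e^{2\pi i/3}$; by Picard's theorem $e^{\alpha}$ assumes each of $\omega,\omega^{2}$ infinitely often and (outside the thin critical set of $\alpha$) simply, producing infinitely many simple zeros on the right-hand side, whereas $g^{2}$ has zeros only of even multiplicity -- a contradiction. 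An analogous perfect-cube obstruction rules out $e^{\beta}\equiv e^{2\alpha}$, where the identity becomes $g^{3}=-e^{-2\alpha}(e^{\alpha}+1)$. In the third degenerate subcase $e^{\beta}\equiv 1$ with $e^{\alpha}\not\equiv 1$, the identity collapses to the explicit rational formula $g=(e^{\alpha}+1)/(e^{2\alpha}+e^{\alpha}+1)$; the Valiron--Mokhon'ko theorem gives $T(r,g)=2T(r,e^{\alpha})+O(1)$, while the simple poles at the zeros of $e^{2\alpha}+e^{\alpha}+1$ yield $\ol{N}(r,g)=2T(r,e^{\alpha})+S(r)$, forcing $\Theta(\infty;g)=0$ and contradicting $\Theta(\infty;g)>\tfrac{1}{2}$.

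The main obstacle is the generic subcase in which none of the three coefficients vanishes, so the identity becomes a non-degenerate cubic relation $g^{2}(Ag+B)=-C$ with $A=e^{3\alpha}-e^{\beta}$, $B=-(e^{2\alpha}-e^{\beta})$, $C=e^{\beta}-1$ all nonzero. A direct multiplicity accounting in this identity shows that the zeros of $g$ and of $Ag+B$ must lie in the zero set of $C$, while the poles of $g$ must lie in the zero set of $A$. Coupling the identity with its derivative, eliminating $g^{3}$, and applying the logarithmic-derivative lemma together with Nevanlinna's second fundamental theorem applied to $g$ at the values $0$, $\infty$ and a suitably chosen third value, I expect to obtain an estimate of the form $T(r,g)\le\ol{N}(r,g)+o(T(r,g))$, and similarly for $f=e^{\alpha}g$; combined with the deficiency hypotheses $\Theta(\infty;f),\Theta(\infty;g)>\tfrac{1}{2}$ this yields the required contradiction. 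The delicate point is verifying that the residual terms involving $e^{\alpha}$ and $e^{\beta}$ can be absorbed as small-function error; the specific algebraic structure of $P(z)=z^{3}-z^{2}-1$, in particular the vanishing of $P^{\prime}(0)$, is what ultimately makes this absorption feasible.
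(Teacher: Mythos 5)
The paper never proves Theorem~A at all---it is quoted from Fang--Xu \cite{Fan & Xu-CJCM-1997} as background---so your argument has to stand on its own. Your reduction is sound as far as it goes: CM sharing of $\{0\}$ and $\{\infty\}$ gives $f=e^{\alpha}g$, CM sharing of $\mathcal{S}_2$ together with $\{\infty\}$ gives $P(f)=e^{\beta}P(g)$ for $P(z)=z^{3}-z^{2}-1$, the identity $(e^{3\alpha}-e^{\beta})g^{3}-(e^{2\alpha}-e^{\beta})g^{2}+(e^{\beta}-1)=0$ is correct, and the degenerate cases are handled essentially correctly, modulo small repairs: Picard only guarantees that $e^{\alpha}$ hits at least one of $\omega,\omega^{2}$ (it may omit the other), and simplicity of those points is not automatic ``outside a thin critical set''---you need the standard estimate $N\left(r,\frac{1}{\alpha^{\prime}}\right)\leq T(r,\alpha^{\prime})+O(1)=m(r,\alpha^{\prime})=S(r,e^{\alpha})$ to show multiple $\omega$-points contribute only $S(r,e^{\alpha})$, and then the second fundamental theorem for $e^{\alpha}$ to get the parity (or divisibility-by-three) contradiction.

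The genuine gap is the generic case $A,B,C\not\equiv 0$, which is the heart of the theorem and which you only sketch (``I expect to obtain\ldots''). The decisive obstruction is that $e^{\alpha}$ and $e^{\beta}$ are \emph{not} small functions with respect to $g$: from $e^{\alpha}=f/g$ and $e^{\beta}=P(f)/P(g)$ one only gets $T(r,e^{\alpha})+T(r,e^{\beta})=O\left(T(r,f)+T(r,g)\right)$, not $S(r,g)$, so the ``residual terms involving $e^{\alpha}$ and $e^{\beta}$'' cannot simply be absorbed as small-function error; your multiplicity accounting likewise only yields $\ol N\left(r,\frac{1}{g}\right)\leq \ol N\left(r,\frac{1}{C}\right)$ and $\ol N(r,g)\leq \ol N\left(r,\frac{1}{A}\right)$, bounds of the same order of magnitude as $T(r,g)$, which lead nowhere by themselves. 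Moreover this is exactly the place where the hypotheses $\Theta(\infty;f)>\frac{1}{2}$, $\Theta(\infty;g)>\frac{1}{2}$ must do real work (for a three-element set $\mathcal{S}_2$ no such uniqueness theorem is known without an extra condition on poles or deficiencies), and your sketch never exhibits how they enter; the closing appeal to the vanishing of $P^{\prime}(0)$ is not an argument. The established route---the one this paper itself follows for its Theorem \ref{t1.2}---is different: set $\mathcal{F}=f^{2}(f-1)$, $\mathcal{G}=g^{2}(g-1)$, so that sharing $\mathcal{S}_2$ means $\mathcal{F}$ and $\mathcal{G}$ share the value $1$, estimate via the second fundamental theorem and the auxiliary function $\mathcal{H}$ of (\ref{e2.2}) (with the deficiency hypothesis absorbed into the counting-function estimates), and, when $\mathcal{H}\equiv 0$, integrate to $\frac{1}{\mathcal{F}-1}\equiv\frac{\mathcal{A}}{\mathcal{G}-1}+\mathcal{B}$ and analyse the resulting algebraic relations. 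As it stands, your proposal disposes of the degenerate configurations but leaves the main case unproved.
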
\par Dealing with the \emph{ Question \ref{qn1.2}}, \emph{Qiu - Fang} \cite{Qiu & Fan-BSMS-2002} obtained a result with an extra supposition that the meromorphic functions $f$ and $g$ both having poles of multiplicity $\geq 2$. In the same paper they also exhibited some examples to show that the condition on the poles of $f$ and $g$ can not be removed.  
\par In $2004$, \emph{Yi - Lin} \cite{Yi & Lin-PJAS-2004} proved the following results.
\begin{theoB}\cite{Yi & Lin-PJAS-2004}
	Let $\mathcal{S}_1=\{0\}$, $\mathcal{S}_2=\{z:z^n+bz^{n-1}+c=0\}$ and $\mathcal{S}_3=\{\infty\}$, where $a$, $b$ are non-zero constants such that $z^n+bz^{n-1}+c=0$ has no repeated root and $n\geq 3$ is an integer. If for two non-constant meromorphic functions $f$ and $g$, $E_{f}(\mathcal{S}_{j})=E_{g}(\mathcal{S}_{j})$, for $j=1,2,3$ and $\delta_{1}(\infty;f)>\displaystyle\frac{5}{6}$, then $f\equiv g$.   
\end{theoB}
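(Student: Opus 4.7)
The plan is to translate the three set-sharing hypotheses into CM value-sharings for suitable auxiliary functions, write down a master functional identity, and settle the argument by case analysis combined with Nevanlinna-theoretic growth estimates. Set $P(w):=w^{n}+bw^{n-1}+c$. Because the roots of $P$ are assumed distinct, the condition $E_{f}(\mathcal{S}_{2})=E_{g}(\mathcal{S}_{2})$ is equivalent to $P(f)$ and $P(g)$ sharing the value $0$ CM; together with the hypotheses that $f,g$ share $0$ CM and $\infty$ CM from $\mathcal{S}_{1}$ and $\mathcal{S}_{3}$, one observes that $f/g$ is meromorphic on $\mathbb{C}$ with neither zeros nor poles, so $f/g=e^{\alpha}$ for some entire $\alpha$. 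Similarly, $P(f)$ and $P(g)$ share poles CM (a pole of $f$ of order $m$ yields one of $P(f)$ of order $nm$, matched by $g$), so $P(f)/P(g)=e^{\beta}$ for an entire $\beta$. Substituting $f=ge^{\alpha}$ into $P(f)=e^{\beta}P(g)$ and rearranging gives the master identity
\begin{equation*}
g^{n}\bigl(e^{n\alpha}-e^{\beta}\bigr)+b\,g^{n-1}\bigl(e^{(n-1)\alpha}-e^{\beta}\bigr)+c\bigl(1-e^{\beta}\bigr)\equiv 0.
\end{equation*}
If $\alpha\equiv 0$, then $f\equiv g$ and the theorem is proved; the goal becomes, assuming $e^{\alpha}\not\equiv 1$, to derive a contradiction.

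The contradiction is obtained by a case analysis on which of the coefficients $A:=e^{n\alpha}-e^{\beta}$, $B:=e^{(n-1)\alpha}-e^{\beta}$, $C:=1-e^{\beta}$ vanishes identically. If $A\equiv B\equiv 0$ then $e^{(n-1)\alpha}(e^{\alpha}-1)\equiv 0$, forcing $e^{\alpha}\equiv 1$, which is excluded. If exactly one of $A,B,C$ vanishes identically, the remaining two-term identity expresses a positive power of $g$ as a ratio of two exponentials, giving $T(r,g)=O(T(r,e^{\alpha})+T(r,e^{\beta}))+O(1)$, an estimate that, combined with the deficiency hypothesis and the second main theorem, contradicts the non-constancy of $g$ for $n\ge 3$. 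In the generic case $A,B,C\not\equiv 0$, one rewrites the identity as $g^{n-1}(Ag+bB)=-cC$ and views it as an algebraic equation of degree $n$ for $g$ over the field $\mathcal{F}$ of small meromorphic functions generated by $e^{\alpha}$ and $e^{\beta}$. Applying the Nevanlinna second main theorem (in its Steinmetz form relative to small targets) to $g$ at the three targets $0$, $\infty$, and $-bB/A\in\mathcal{F}$, and invoking the deficiency bound $\overline{N}(r,g)=\overline{N}(r,f)\le(\tfrac{1}{6}+\varepsilon)T(r,f)+S(r,f)$ coming from $\delta_{1}(\infty;f)>\tfrac{5}{6}$, yields a strict inequality $(2-\varepsilon)T(r,g)\le (\tfrac{1}{6}+\varepsilon)T(r,g)+S(r,g)$ after using the first equation to bound the two remaining counting terms by $S(r,g)$, which is the desired contradiction.

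The main obstacle I anticipate is the precise accounting in the generic subcase: one must control the reduced multiple-point counting terms $\overline{N}_{(2}(r,1/g)$ and the counting contribution at the small target $-bB/A$, and verify that these are absorbed into $S(r,g)$ by the CM nature of the sharing. A secondary, but important, point is to verify that the distinct-root hypothesis on $P$ is exactly what guarantees the multiplicity-faithful translation of $\mathcal{S}_{2}$-sharing into CM sharing of $P(f)$ and $P(g)$, so that $P(f)/P(g)$ is indeed nowhere-vanishing and the exponential representation $P(f)/P(g)=e^{\beta}$ is legitimate; if $P$ had a repeated root, the multiplicity matching would fail and $e^{\beta}$ would have to be replaced by a more delicate quotient, invalidating the master identity.
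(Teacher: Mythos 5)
This statement (Theorem B) is quoted in the paper from Yi--Lin \cite{Yi & Lin-PJAS-2004}; the paper itself contains no proof of it, so your attempt can only be judged on its own terms. Your set-up is sound: with simple roots of $P$, the CM sharing of $\mathcal{S}_2$ does give $P(f)/P(g)=e^{\beta}$, the CM sharing of $\{0\}$ and $\{\infty\}$ gives $f/g=e^{\alpha}$, and the master identity $g^{n}(e^{n\alpha}-e^{\beta})+b\,g^{n-1}(e^{(n-1)\alpha}-e^{\beta})+c(1-e^{\beta})\equiv 0$ is correct, as is the disposal of the case $A\equiv B\equiv 0$. The problem is that everything after that point is asserted rather than proved, and the assertions are not true as stated. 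In the generic case you treat $A$, $B$, $C$ (equivalently $e^{\alpha}$, $e^{\beta}$) as small functions of $g$ so that you may apply a small-target second main theorem and absorb $N(r,1/C)$-type terms into $S(r,g)$. But smallness is never established and is not available from the hypotheses: $f=ge^{\alpha}$ and $P(f)=e^{\beta}P(g)$ only give $T(r,e^{\alpha})\leq T(r,f)+T(r,g)+O(1)$ and $T(r,e^{\beta})\leq n\{T(r,f)+T(r,g)\}+O(1)$, i.e.\ growth comparable to $g$, not $S(r,g)$. Without that, the bound of the counting functions at $0$ and at $-bB/A$ by $S(r,g)$ collapses, and the concluding inequality $(2-\varepsilon)T(r,g)\leq(\tfrac16+\varepsilon)T(r,g)+S(r,g)$ does not follow. (It also silently uses $T(r,f)\sim T(r,g)$, which you never derive.)

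The subcases where exactly one of $A,B,C$ vanishes are equally unfinished: an estimate $T(r,g)=O(T(r,e^{\alpha})+T(r,e^{\beta}))$ is not a contradiction, since nothing a priori prevents $g$ from being exactly such a rational expression in $e^{\alpha}$ (e.g.\ if $C\equiv 0$ one gets $g=-b\frac{\sum_{j=0}^{n-2}e^{j\alpha}}{\sum_{j=0}^{n-1}e^{j\alpha}}$, a perfectly good nonconstant meromorphic function). Ruling these representations out is precisely where the real work lies: one must compare $\ol N(r,g)=\ol N(r,f)$ with $T(r,f)$ via the second main theorem applied to $e^{\alpha}$ (whose roots-of-unity preimages force many poles of $g$), and this is exactly where the hypotheses $n\geq 3$ and $\delta_{1}(\infty;f)>\tfrac56$ must enter quantitatively. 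Your sketch never uses the threshold $\tfrac56$ in any computation, yet that constant is the crux of the theorem; a proof that does not show where $\tfrac56$ (and the cutoff $n\geq3$) is needed cannot be complete. So the skeleton is a reasonable classical starting point, but the cases carrying the actual content of the theorem are left as unsupported claims, one of which (smallness of $e^{\alpha},e^{\beta}$ relative to $g$) is in general false.
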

\begin{theoC}\cite{Yi & Lin-PJAS-2004}
	Let $\mathcal{S}_1=\{0\}$, $\mathcal{S}_2=\{z:z^n+bz^{n-1}+c=0\}$ and $\mathcal{S}_3=\{\infty\}$, where $a$, $b$ are non-zero constants such that $z^n+bz^{n-1}+c=0$ has no repeated root and $n\geq 4$ is an integer. If for two non-constant meromorphic functions $f$ and $g$, $E_{f}(\mathcal{S}_{j})=E_{g}(\mathcal{S}_{j})$, for $j=1,2,3$ and $\Theta(\infty;f)>0$, then $f\equiv g$.   
\end{theoC}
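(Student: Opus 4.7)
The plan is to convert the three CM sharings into two exponential identities, combine them into a single algebraic relation between $f$ and $g$, and then use Nevanlinna's second main theorem together with $n\geq 4$ and the deficiency hypothesis $\Theta(\infty;f)>0$ to force the non-trivial cases to collapse.

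Since $f,g$ share $0$ and $\infty$ CM, the quotient $f/g$ is entire and zero-free, so $f=g\,e^{\beta}$ for some entire function $\beta$. With $P(z):=z^{n}+bz^{n-1}+c$, the simple-root hypothesis on $P$ makes $E_{f}(\mathcal{S}_{2})=E_{g}(\mathcal{S}_{2})$ equivalent to $P(f)$ and $P(g)$ sharing $0$ CM, while the CM sharing of $\infty$ matches the poles of $P(f)$ and $P(g)$ exactly (both of multiplicity $n$ times the pole order of $f$). Hence $P(f)=e^{\alpha}P(g)$ for some entire $\alpha$. Substituting $f=g\,e^{\beta}$ yields the master identity
\[
g^{n-1}\bigl[(e^{n\beta}-e^{\alpha})\,g+b(e^{(n-1)\beta}-e^{\alpha})\bigr]=c\,(e^{\alpha}-1).
\]

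I would split the analysis according to which of $e^{\alpha}$, $e^{n\beta}$, $e^{(n-1)\beta}$ coincide identically. If $e^{\alpha}\equiv 1$, the right-hand side vanishes; since $b,c\neq 0$ and $g$ is non-constant, a short computation forces $e^{\beta}\equiv 1$ and hence $f\equiv g$. When exactly one of the two bracketed coefficients vanishes identically, the identity pins $g$ or $g^{n-1}$ down to a ratio of exponentials, and standard logarithmic-derivative estimates (yielding $T(r,e^{\alpha}),\,T(r,e^{\beta})=S(r,g)$) combined with the second main theorem contradict the non-constancy of $g$.

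The main obstacle is the generic case in which $e^{\alpha},e^{n\beta},e^{(n-1)\beta}$ are pairwise distinct. Here I read the master identity as a polynomial equation of degree $n$ in $g$ whose coefficients are small relative to $g$, and would apply the second main theorem to $g$ with the $n$ (small) roots of that polynomial together with $\infty$. Because $n\geq 4$ we have $n+1\geq 5$ target values, producing an inequality of the shape
\[
(n-2)\,T(r,g)\leq \sum_{i=1}^{n}\ol N\!\left(r,\frac{1}{g-\lambda_{i}}\right)+\ol N(r,g)+S(r,g).
\]
The zero set of the small right-hand side of the master identity controls the $\ol N$-sum on the right, while the deficiency hypothesis $\Theta(\infty;g)=\Theta(\infty;f)>0$ yields a strict saving on $\ol N(r,g)$; combining these forces $T(r,g)=S(r,g)$, contradicting non-constancy and leaving $f\equiv g$.
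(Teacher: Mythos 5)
Your reduction to the master identity $g^{n-1}\bigl[(e^{n\beta}-e^{\alpha})g+b(e^{(n-1)\beta}-e^{\alpha})\bigr]=c(e^{\alpha}-1)$ is fine (note the paper itself does not prove Theorem C; it quotes it from Yi--Lin, so you can only be measured against the standard argument). The first genuine gap is your treatment of the case $e^{\alpha}\equiv 1$: it is \emph{not} true that a short computation forces $e^{\beta}\equiv 1$. If $e^{\alpha}\equiv 1$ and $h:=e^{\beta}$ satisfies $h^{n}\not\equiv 1$, the identity gives exactly
$g=-b\,(h^{n-1}-1)/(h^{n}-1)$, $f=hg$, with $h$ a non-constant zero-free entire function. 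This is the classical extremal family: one checks directly that $P(f)\equiv P(g)$, that $f,g$ share $0$ and $\infty$ CM (since $h\neq 0,\infty$), and $f\not\equiv g$ — so this configuration is consistent with \emph{all} the sharing hypotheses and cannot be killed algebraically. It is precisely here that $\Theta(\infty;f)>0$ must be spent: the poles of $f$ are the $\zeta$-points of $h$ for the $n-1$ roots of unity $\zeta^{n}=1$, $\zeta\neq 1$, and the second main theorem applied to $h$ (which omits $0,\infty$) gives $\overline N(r,f)\geq (n-1)T(r,h)-S(r,h)$ while $T(r,f)=(n-1)T(r,h)+S(r,h)$ by Mokhon'ko's lemma, so $\Theta(\infty;f)=0$, a contradiction. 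Your proposal never uses the deficiency hypothesis in this case, so as written it proves a false statement (the hypothesis $\Theta(\infty;f)>0$ is known to be non-removable precisely because of this family).

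The second gap is in your ``generic'' case: you treat the master identity as a degree-$n$ polynomial in $g$ ``whose coefficients are small relative to $g$'', but there is no justification that $T(r,e^{\alpha})$ or $T(r,e^{\beta})$ is $S(r,g)$ — indeed $e^{\beta}=f/g$ and $e^{\alpha}=P(f)/P(g)$ have characteristic a priori comparable to $T(r,g)$ (already in the extremal family above $T(r,h)\sim T(r,g)/(n-1)$, which is not $S(r,g)$). Moreover the roots $\lambda_{i}$ of that polynomial are algebroid rather than small meromorphic functions, so the second main theorem cannot be invoked for them in the form you use; the same unjustified smallness claim also undermines your disposal of the subcases where one bracket vanishes (though those subcases can be rescued by writing $g^{n}$ or $g^{n-1}$ as an explicit rational function of $h$ and applying the second main theorem to $h$ with the resulting multiplicity restrictions). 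The standard route — both in Yi--Lin's proof of Theorem C and in the paper's proof of its own Theorem \ref{t1.2} — avoids this entirely: one introduces $\mathcal{F}$, $\mathcal{G}$ as in (\ref{e2.1}) and the auxiliary function $\mathcal{H}$ of (\ref{e2.2}), uses the sharing to estimate $N(r,\mathcal{H})$ and the second fundamental theorem to force $\mathcal{H}\equiv 0$, integrates to obtain $\frac{1}{\mathcal{F}-1}\equiv\frac{\mathcal{A}}{\mathcal{G}-1}+\mathcal{B}$ (so the multiplier between $P(f)$ and $P(g)$ is forced to be a nonzero \emph{constant}, with $\mathcal{B}=0$ coming from the shared poles), and only then performs the case analysis, reserving the deficiency hypothesis for the case $P(f)\equiv P(g)$ described above. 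Without some substitute for the $\mathcal{H}$-machinery, your generic case does not get off the ground.
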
	\par 
Progressively the research on \emph{\sc Question \ref{qn1.1}} for meromorphic functions as well as \emph{\sc Question \ref{qn1.2}} gained a valuable space in the literature and now-a-days it has increasingly become an impressive  branch of the modern uniqueness theory of meromorphic functions. During the last few years a considerable amount of work has been done to explore the possible answer to \emph{\sc Question \ref{qn1.2}} by many Mathematicians. \par In $2001$, the introduction of the new notion of sharing which is a scaling between $CM$ or $IM$, known as weighted sharing of values and sets by \emph{Lahiri} \emph{\cite{Lah-NMJ-2001, Lah-CVTA-2001}} further speed up the research in the direction of \emph{Question \ref{qn1.2}}.
\begin{defi}
	Let $k\in\mathbb{N}\cup\{0\}\cup\{\infty\}$. For $a\in\mathbb{C}\cup\{\infty\}$, we denote by $E_{f}(a,k)$ the set of all $a$-points of $f$, where an $a$-point of multiplicity $m$ is counted $m$ times if $m\leq k$ and $k+1$ times if $m\geq k+1$. If $E_{f}(a,k)=E_{g}(a,k)$, we say that $f$ and $g$ share the value $a$ with weight $k$.  
\end{defi} 
\begin{defi}
	Let $\mathcal{S}\subset\mathbb{C}\cup\{\infty\}$ be non-empty and $k\in\mathbb{N}\cup\{0\}\cup\{\infty\}$. We denote by $E_{f}(\mathcal{S},k)$ the set $E_{f}(\mathcal{S},k)=\displaystyle\bigcup_{a\in\mathcal{S}}E_{f}(a,k)$.\par Clearly $E_{f}(\mathcal{S})=E_{f}(\mathcal{S},\infty)$ and $\ol E(\mathcal{S},k)=E_{f}(\mathcal{S},0)$.   
\end{defi}\par
With the help of wighted sharing of sets, \emph{Banerjee - Mukherjee} \cite{Ban & Muk-HJ-2008} obtained the following results.
\begin{theoD}\cite{Ban & Muk-HJ-2008}
		Let $\mathcal{S}_1=\{0\}$, $\mathcal{S}_2=\{z:z^n+bz^{n-1}+c=0\}$ and $\mathcal{S}_3=\{\infty\}$, where $a$, $b$ are non-zero constants such that $z^n+bz^{n-1}+c=0$ has no repeated root and $n\geq 3$ is an integer. If for two non-constant meromorphic functions $f$ and $g$ having no simple pole satisfying, $E_{f}(\mathcal{S}_{1},1)=E_{g}(\mathcal{S}_{1},1)$, $E_{f}(\mathcal{S}_{2},5)=E_{g}(\mathcal{S}_{2},5)$ and $E_{f}(\mathcal{S}_{3},\infty)=E_{g}(\mathcal{S}_{3},\infty)$, then $f\equiv g$.  
\end{theoD}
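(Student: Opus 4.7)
The plan is to reduce the set-sharing hypotheses to value-sharing for the auxiliary pair
\[ F := -\tfrac{1}{c}\bigl(f^n + b f^{n-1}\bigr), \qquad G := -\tfrac{1}{c}\bigl(g^n + b g^{n-1}\bigr), \]
and then apply the standard Lahiri-type weighted-sharing machinery. Under this substitution, $E_f(\mathcal{S}_2, 5) = E_g(\mathcal{S}_2, 5)$ becomes ``$F$ and $G$ share $(1,5)$''; the sharing $E_f(\{0\},1) = E_g(\{0\},1)$, combined with the fact that a zero of $f$ of multiplicity $p$ becomes a zero of $F$ of multiplicity $(n-1)p$, gives tight control of $\ol N(r, 1/F) + \ol N(r, 1/G)$; and $E_f(\{\infty\},\infty) = E_g(\{\infty\},\infty)$ together with the no-simple-pole hypothesis forces $F$ and $G$ to share $\infty$ CM with every pole of multiplicity at least $2n$, so $\ol N(r, F) \le \tfrac{1}{2n} N(r, F)$.

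I would then introduce the standard Lahiri difference
\[ H := \Bigl(\tfrac{F''}{F'} - \tfrac{2 F'}{F-1}\Bigr) - \Bigl(\tfrac{G''}{G'} - \tfrac{2 G'}{G-1}\Bigr) \]
and split the argument into the cases $H \not\equiv 0$ and $H \equiv 0$. If $H \not\equiv 0$, a local computation shows that the poles of $H$ are controlled by reduced counting functions of multiple zeros of $F, G, F-1, G-1, F', G'$, while every simple common $1$-point of $F$ and $G$ is a zero of $H$; the weight-$5$ sharing at $1$ suppresses the multiple $1$-points very effectively. Applying the second fundamental theorem to $F$ and $G$ at $0, 1, \infty$, using $T(r,F) = n\,T(r,f) + O(1)$, and inserting the pole estimate from the no-simple-pole hypothesis, I expect the resulting inequality to close precisely when $n \geq 3$.

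If $H \equiv 0$, two integrations yield a M\"obius relation
\[ \frac{1}{F-1} = \frac{A}{G-1} + B, \qquad A \in \mathbb{C}^{*},\ B \in \mathbb{C}, \]
and I would follow the classical trichotomy in $(A,B)$. The two subcases with $B \ne 0$ force $F$ or $G$ to omit a finite value or satisfy a Picard-type constraint, which through the polynomial representation $F-1 = -\tfrac{1}{c}\prod_{i=1}^{n}(f-\alpha_i)$ (with the $\alpha_i$ distinct) contradicts the sharing hypotheses once the deficiency information from the no-simple-pole condition is invoked. The surviving subcase $B = 0$, $A = 1$ gives $F \equiv G$, i.e.\ $f^{n-1}(f+b) \equiv g^{n-1}(g+b)$; factoring this identity and using that $f, g$ share $(0,1)$ rules out the extraneous relations and yields $f \equiv g$.

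The main obstacle I anticipate is in the case $H \not\equiv 0$: to push the degree threshold down to $n \geq 3$, one must simultaneously exploit the weighted sharing at $1$ with weight $5$, the level-$1$ sharing of $0$, and the multiplicity-two pole condition, and the counting-function inequalities must be balanced very tightly, particularly in handling the contribution of $\ol N_{(2}(r, 1/(F-1))$ against the precise gain from $\ol N(r, F) \leq \tfrac{1}{2n} N(r, F)$.
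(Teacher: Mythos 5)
First, note that the statement you are proving is the paper's \emph{Theorem D}, which is only quoted from Banerjee--Mukherjee (Hokkaido Math.\ J., 2008); the present paper contains no proof of it, so the only in-paper comparison available is with the proof of Theorem \ref{t1.2}, which indeed uses the same skeleton you propose ($\mathcal{F},\mathcal{G}$ built from $z^{n-1}(az+b)$, the auxiliary function $\mathcal{H}$, a case split on $\mathcal{H}\not\equiv 0$ versus $\mathcal{H}\equiv 0$). So your choice of route is the standard and appropriate one. The problem is that what you have written is an outline with the decisive steps left as expectations, and two of the gaps are substantive rather than routine.

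Concretely: (i) In the case $H\not\equiv 0$, the entire content of a theorem of this type is the quantitative bookkeeping --- showing that after applying the second fundamental theorem to $F$ and $G$, inserting $T(r,F)=nT(r,f)+O(1)$, the weight-$5$ suppression of multiple $1$-points, the weight-$1$ sharing of $0$, and $\ol N(r,F)\leq\frac{1}{2n}N(r,F)$, the resulting coefficient inequality really closes for every $n\geq 3$. You state that you ``expect'' this to happen; that is precisely the part that must be exhibited, since with weaker weights or with simple poles allowed the same scheme fails at $n=3$. (ii) Your claim that $E_f(\{0\},1)=E_g(\{0\},1)$ gives tight control of $\ol N\left(r,\frac{1}{F}\right)+\ol N\left(r,\frac{1}{G}\right)$ overlooks that $F=-\frac{1}{c}f^{n-1}(f+b)$, so zeros of $F$ also arise from the $(-b)$-points of $f$; these are not covered by any sharing hypothesis and contribute a term of order $T(r,f)$ that must appear explicitly in the budget (it is absorbable only because $T(r,F)=nT(r,f)$, which again is part of the computation you skipped). (iii) In the case $H\equiv 0$, $B=0$, $A=1$, the passage from $f^{n-1}(f+b)\equiv g^{n-1}(g+b)$ to $f\equiv g$ is not a matter of ``factoring'': setting $h=f/g$ one obtains $g\left(h^{n}-1\right)=b\left(1-h^{n-1}\right)$, and one must rule out non-constant $h$ by a second-main-theorem argument on $h$ involving the $n$-th roots of unity together with the pole/zero sharing (and then observe that a constant $h$ satisfying $h^{n}=h^{n-1}=1$ is $1$). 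Likewise the $B\neq 0$ subcases require the actual deficiency computation, not merely the assertion of a ``Picard-type constraint.'' Until (i)--(iii) are carried out, the argument is a plausible plan but not a proof.
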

\begin{theoE}\cite{Ban & Muk-HJ-2008}
		Let $\mathcal{S}_1=\{0\}$, $\mathcal{S}_2=\{z:z^n+bz^{n-1}+c=0\}$ and $\mathcal{S}_3=\{\infty\}$, where $a$, $b$ are non-zero constants such that $z^n+bz^{n-1}+c=0$ has no repeated root and $n\geq 3$ is an integer. If for two non-constant meromorphic functions $f$ and $g$ satisfying, $E_{f}(\mathcal{S}_{1},0)=E_{g}(\mathcal{S}_{1},0)$, $E_{f}(\mathcal{S}_{2},6)=E_{g}(\mathcal{S}_{2},6)$,  $E_{f}(\mathcal{S}_{3},\infty)=E_{g}(\mathcal{S}_{3},\infty)$ and $\delta_{1)}(\infty;f)+\delta_{1)}(\infty;g)>\displaystyle\frac{5}{n}$, then $f\equiv g$.
\end{theoE}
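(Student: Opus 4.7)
The plan is to reduce the set-sharing hypothesis to a value-sharing problem for suitably normalised auxiliary functions and then to carry out the standard case analysis typical for this circle of results. Define $F = -(f^n+bf^{n-1})/c$ and $G = -(g^n+bg^{n-1})/c$. Since the zeros of $F-1$ are exactly the $\mathcal{S}_2$-points of $f$ and the roots of $z^n+bz^{n-1}+c$ are simple, the hypothesis $E_f(\mathcal{S}_2,6)=E_g(\mathcal{S}_2,6)$ becomes: $F$ and $G$ share $(1,6)$. Likewise $E_f(\mathcal{S}_3,\infty)=E_g(\mathcal{S}_3,\infty)$ makes $F$ and $G$ share $\infty$ CM (a pole of $f$ of order $p$ becomes a pole of $F$ of order $np$), and $E_f(\mathcal{S}_1,0)=E_g(\mathcal{S}_1,0)$ says $f$ and $g$ share $0$ IM, which transfers to the $(n-1)$-fold zero of $F$ at $f=0$ and the simple zero of $F$ at $f=-b$.

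Next I would invoke a Lahiri--Banerjee type weighted-sharing lemma: if $F,G$ share $(1,m)$ with $m\ge 2$ together with $\infty$ CM, then either $F\equiv G$, or $F\cdot G\equiv 1$, or else a second main theorem inequality involving $N_2(r,1/F)$, $N_2(r,1/G)$, $\ol N(r,F)$ and $\ol N(r,G)$ holds. With weight $6$ on $\mathcal{S}_2$, direct estimation of these truncated counting functions (using the transferred CM-sharing of $\infty$ and the IM-sharing of $0$) shows that the third possibility is inconsistent with the growth of $T(r,F)+T(r,G)$ when $n\ge 3$. Only the two algebraic alternatives survive.

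In the case $F\equiv G$ one obtains $f^{n-1}(f+b)=g^{n-1}(g+b)$. Setting $h=f/g$ this becomes
\[
g(h^n-1) \;=\; -b(h^{n-1}-1).
\]
If $h$ is a constant then $h^n=h^{n-1}=1$, forcing $h=1$, hence $f\equiv g$. If $h$ is non-constant, $g$ is a rational function of $h$, and a short Nevanlinna estimate -- together with the $0$-IM sharing of $f,g$ which rules out $h=0,\infty$ as exceptional values -- produces a contradiction with $T(r,g)$. In the case $F\cdot G\equiv 1$, the relation is $(f^n+bf^{n-1})(g^n+bg^{n-1})=c^2$; each pole of $f$ of order $p$ must match a zero of $g^{n-1}(g+b)$ of total order $np$, splitting into the $g=0$ and $g=-b$ divisors. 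After separating simple poles from multiple ones (using that $f,g$ share $\infty$ CM) and applying the second main theorem to $g$ at $\{0,-b,\infty\}$, one obtains an inequality of the form
\[
(n-1)\,T(r,g) \;\le\; \ol N(r,1/g) + \ol N\!\bigl(r,1/(g+b)\bigr) + \ol N(r,g) + S(r,g),
\]
and a symmetric inequality for $f$. Bounding the pole contribution via $N_{1)}(r,g)$ and invoking $\delta_{1)}(\infty;\cdot)$ for $f$ and $g$, the two inequalities combined become incompatible exactly when $\delta_{1)}(\infty;f)+\delta_{1)}(\infty;g)>5/n$. This is the step in which the deficiency hypothesis is consumed.

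The main obstacle is the quantitative bookkeeping: one must split the pole and zero divisors into simple and multiple parts (which is why $\delta_{1)}$ rather than $\Theta$ appears), and check that the rational parametrisation of $g$ by $h$ in the $F\equiv G$ subcase does not create accidental coincidences with the distinguished values $0,-b,\infty$. The numerical constant $5/n$ in the $FG\equiv 1$ alternative is sharp and depends on carefully truncating the counting functions at weight $1$; extracting precisely this constant, rather than a weaker $\Theta$-type bound, is the technical core of the argument.
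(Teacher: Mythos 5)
Your sketch cannot be checked against an in-paper argument, because the paper never proves this statement: Theorem E is quoted from Banerjee--Mukherjee (Hokkaido Math.\ J.\ 37 (2008)), so it stands or falls on its own. And it has a genuine gap at the decisive step. After reducing to $F=-f^{n-1}(f+b)/c$, $G=-g^{n-1}(g+b)/c$ sharing $(1,6)$, the trichotomy ``$F\equiv G$, or $FG\equiv1$, or a second-main-theorem inequality'' is standard, but your claim that the inequality alternative is ``inconsistent with the growth of $T(r,F)+T(r,G)$ when $n\ge3$'' by direct estimation is exactly the part that cannot be waved through. One has $T(r,F)=nT(r,f)+S(r,f)$, while the relevant right-hand side obeys only $N_2\left(r,\frac{1}{F}\right)+N_2(r,F)\le 2\ol N\left(r,\frac 1f\right)+N_2\left(r,\frac{1}{f+b}\right)+2\ol N(r,f)\le 5T(r,f)+O(1)$, so the crude bound gives nothing for small $n$; to reach $n\ge3$ one must exploit the CM sharing of poles, the weight $6$ on the $1$-points, and, crucially, the hypothesis on \emph{simple} poles, via $\ol N(r,f)\le \frac12 N_{1)}(r,f)+\frac12 N(r,f)$. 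That is precisely where $\delta_{1)}(\infty;f)+\delta_{1)}(\infty;g)>\frac 5n$ is consumed (and why Theorem D can instead assume the absence of simple poles). Moreover the paper's introduction recalls that Qiu--Fang produced examples showing that for a degree-$3$ main set some condition on the poles cannot be removed; hence any argument that disposes of the inequality case for all $n\ge 3$ using only the sharing data, as yours proposes, must be wrong.

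Symmetrically, you spend the deficiency hypothesis where it is not needed. If $FG\equiv1$, i.e.\ $f^{n-1}(f+b)\,g^{n-1}(g+b)\equiv c^2$, then because $f$ and $g$ share $\infty$ CM a pole of $f$ would be a pole of $g$ and so could not be a zero of $g^{n-1}(g+b)$; consequently $f$ omits $0$, $-b$ and $\infty$, and Picard's theorem alone gives the contradiction --- no deficiency is used. Also, the ``short Nevanlinna estimate'' in the $F\equiv G$, $h=f/g$ non-constant subcase is not routine: from $g=-b\,\frac{h^{n-1}-1}{h^n-1}$ one gets $T(r,g)=(n-1)T(r,h)+O(1)$, the $0$-sharing shows $h$ omits $0$ and $\infty$, and the second main theorem applied to the $n$-th roots of unity distinct from $1$ yields only $(n-1)T(r,h)\le\ol N(r,g)+S(r,h)$, which is not yet a contradiction; further input from the sharing (and possibly the deficiency) is required there as well. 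In short, the overall architecture is the right family of ideas, but the proposal hides the theorem's quantitative core behind an assertion and attaches the one indispensable hypothesis to a case that does not need it.
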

\begin{theoF}\cite{Ban & Muk-HJ-2008}
	Let $\mathcal{S}_1=\{0\}$, $\mathcal{S}_2=\{z:z^n+bz^{n-1}+c=0\}$ and $\mathcal{S}_3=\{\infty\}$, where $a$, $b$ are non-zero constants such that $z^n+bz^{n-1}+c=0$ has no repeated root and $n\geq 4$ is an integer. If for two non-constant meromorphic functions $f$ and $g$ satisfying, $E_{f}(\mathcal{S}_{1},0)=E_{g}(\mathcal{S}_{1},0)$, $E_{f}(\mathcal{S}_{2},6)=E_{g}(\mathcal{S}_{2},6)$,  $E_{f}(\mathcal{S}_{3},4)=E_{g}(\mathcal{S}_{3},4)$ and $\delta_{1)}(\infty;f)+\delta_{1)}(\infty;g)>0$, then $f\equiv g$.
\end{theoF}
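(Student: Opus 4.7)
The plan is to reduce the set-sharing hypothesis to a one-point value-sharing problem and then run the standard Lahiri-type argument. Define
\[F := -\frac{f^{n}+bf^{n-1}}{c}, \qquad G := -\frac{g^{n}+bg^{n-1}}{c}.\]
Since $z^{n}+bz^{n-1}+c$ has $n$ simple roots, the hypothesis $E_{f}(\mathcal{S}_{2},6)=E_{g}(\mathcal{S}_{2},6)$ is equivalent to $F$ and $G$ sharing the value $1$ with weight $6$. The sharing of $\infty$ with weight $4$ for $f,g$ translates to sharing of $\infty$ with a much larger weight for $F,G$, while $E_{f}(\mathcal{S}_{1},0)=E_{g}(\mathcal{S}_{1},0)$ controls the zeros of $F$ and $G$ coming from the factor $f^{n-1}(f+b)$. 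Together with $T(r,F)=nT(r,f)+O(1)$ and similarly for $G$, the setup becomes of the type treated in \cite{Ban & Muk-HJ-2008}.

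Introduce the Lahiri auxiliary function
\[H := \left(\frac{F''}{F'}-\frac{2F'}{F-1}\right) - \left(\frac{G''}{G'}-\frac{2G'}{G-1}\right),\]
and split into the cases $H\not\equiv 0$ and $H\equiv 0$. If $H\not\equiv 0$, apply the second fundamental theorem to $F$ and $G$ with targets $\{0,1,\infty\}$. The weight $6$ at $1$ forces every shared $1$-point of multiplicity at most $6$ to contribute only to the small terms, the weight $4$ at $\infty$ absorbs all shared poles of multiplicity at least $5$, and the weight $0$ at $0$ is offset by the large multiplicity $n-1$ of such zeros inside $F$ and $G$. After bookkeeping, the only non-trivial residual term is a reduced counting function of unshared simple poles, and one arrives at an inequality of the rough shape
\[(n-3)\bigl\{T(r,f)+T(r,g)\bigr\} \le \bigl\{2-\delta_{1)}(\infty;f)-\delta_{1)}(\infty;g)\bigr\}\bigl\{T(r,f)+T(r,g)\bigr\} + S(r,f)+S(r,g),\]
which, combined with $n\ge 4$ and $\delta_{1)}(\infty;f)+\delta_{1)}(\infty;g)>0$, is contradictory.

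If $H\equiv 0$, then integrating twice yields a bilinear relation
\[\frac{1}{F-1}=\frac{A}{G-1}+B, \qquad A\neq 0.\]
A standard branching on $(A,B)$, together with the pole-sharing hypothesis and $n\ge 4$, eliminates every subcase except $F\equiv G$, i.e.\ $f^{n}+bf^{n-1}\equiv g^{n}+bg^{n-1}$. Writing $h:=f/g$ and factoring out $(f-g)$ from this identity gives either $f\equiv g$ or an algebraic relation which, as in \cite{Ban & Muk-HJ-2008}, forces $f$ and $g$ to essentially omit $\infty$ and therefore contradicts $\delta_{1)}(\infty;f)+\delta_{1)}(\infty;g)>0$.

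I expect the main obstacle to be the case $H\not\equiv 0$: the relaxation from CM sharing to weight-$4$ sharing at $\infty$ (compared with \emph{Theorem E}) reintroduces several reduced counting functions that must be absorbed using the extra multiplicity provided by $n\ge 4$; this is exactly the trade-off that allows the deficiency threshold to be driven down to $0$. Extracting the coefficient of $\delta_{1)}(\infty;f)+\delta_{1)}(\infty;g)$ sharply from the Nevanlinna inequality is the delicate step.
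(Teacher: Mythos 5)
First, note that Theorem F is only quoted in this paper from Banerjee--Mukherjee; the present paper proves no version of it, so your sketch can only be judged on its own terms, and as it stands it has two genuine gaps. The first is quantitative and sits exactly where you yourself locate the difficulty: the "bookkeeping" in the case $H\not\equiv 0$ is never carried out, and the placeholder inequality you write down, $(n-3)\{T(r,f)+T(r,g)\}\leq\{2-\delta_{1)}(\infty;f)-\delta_{1)}(\infty;g)\}\{T(r,f)+T(r,g)\}+S(r,f)+S(r,g)$, does \emph{not} yield a contradiction in the minimal case $n=4$: there the left-hand coefficient is $1$ while the right-hand coefficient is $2-\delta_{1)}(\infty;f)-\delta_{1)}(\infty;g)$, which can be anything in $(0,2)$, so the hypothesis $\delta_{1)}(\infty;f)+\delta_{1)}(\infty;g)>0$ gives nothing. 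An inequality of this shape would only close the argument for $n\geq 5$. The whole content of the theorem is the sharper estimate obtainable from the specific weights $(0,6,4)$ (weight $6$ at $1$ truncating shared $1$-points up to multiplicity $6$, weight $4$ at $\infty$ leaving essentially only simple unshared-multiplicity poles uncontrolled, and the $(n-1)$-fold zero of $F$ at the zeros of $f$), and none of that is actually extracted; so the heart of the proof is missing rather than merely compressed.

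The second gap is a directional error in the case $H\equiv 0$. After reducing (in the surviving subcase) to $f^{n}+bf^{n-1}\equiv g^{n}+bg^{n-1}$, you propose to conclude by showing that $f$ and $g$ "essentially omit $\infty$ and therefore contradict $\delta_{1)}(\infty;f)+\delta_{1)}(\infty;g)>0$." But with $\delta_{1)}(\infty;f)=1-\limsup N_{1)}(r,f)/T(r,f)$, omitting $\infty$ (or merely having few simple poles) makes each deficiency equal to $1$, so the sum is $2$, which \emph{satisfies} the hypothesis rather than contradicting it; the deficiency condition is violated by an abundance of simple poles, not by their absence. The standard way to finish this subcase is different: set $h=f/g$, obtain $g\,(h^{n}-1)\equiv -b\,(h^{n-1}-1)$, and rule out non-constant $h$ by counting the poles of $g$ arising from the $n$-th roots of unity that are not $(n-1)$-th roots of unity (together with the sharing of $\mathcal{S}_1=\{0\}$ and $\mathcal{S}_3=\{\infty\}$), then conclude $h\equiv 1$ from $h^{n}=h^{n-1}=1$. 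Likewise the elimination of the other $(A,B)$-branches, and of the possibility $FG\equiv 1$, has to be done with the set-sharing hypotheses and is not automatic; asserting that a "standard branching ... eliminates every subcase" leaves precisely the part of the argument where the hypotheses $n\geq 4$ and the weights enter.
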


\par Recently \emph{Banerjee - Majumder} \cite{Ban & Maj-A-2014} obtained two  results by improving some earlier results of \emph{Banerjee} \cite{Ban-APM-2007, Ban-KMJ-2009} as follows.
\begin{theoG}\cite{Ban & Maj-A-2014}
	Let $\mathcal{S}_1=\{0\}$, $\mathcal{S}_2=\{z:z^n+az^{n-1}+b=0\}$ and $\mathcal{S}_3=\{\infty\}$, where $a, b$ are non-zero constants such that $z^n+az^{n-1}+b=0$ has no repeated root and $n(\geq 4)$ be an integer. If for two non-constant meromorphic functions $f$ and $g$, $E_{f}(\mathcal{S}_1,k_1)=E_{g}(\mathcal{S}_1,k_1)$, $E_{f}(\mathcal{S}_2,k_2)=E_{g}(\mathcal{S}_2,k_2)$ and $E_{f}(\mathcal{S}_3,k_3)=E_{g}(\mathcal{S}_3,k_3)$, where $k_1\geq 0$, $k_2\geq 3$, $k_3\geq 1$ are integers satisfying \beas 3k_1k_2k_3>k_2+3k_1+k_3-2k_2k_3+4\;\;\text{and}\;\; \Theta_{f}+\Theta_{g}>0, \eeas where $\Theta_{h}=\Theta(\infty;h)+\Theta\left(\displaystyle\frac{a(1-n)}{n};h\right)$,  then $f\equiv g$. 
\end{theoG}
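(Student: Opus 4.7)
The plan is to reduce the three set-sharing hypotheses to value-sharing conditions for the auxiliary meromorphic functions
\[
F = -\frac{f^{n-1}(f+a)}{b}, \qquad G = -\frac{g^{n-1}(g+a)}{b},
\]
chosen so that $F = 1 \Longleftrightarrow f \in \mathcal{S}_2$. The hypothesis on $\mathcal{S}_2$ then says that $F$ and $G$ share $1$ with weight $k_2$, the hypothesis on $\mathcal{S}_3$ gives sharing of $\infty$ with weight $k_3$ (the poles of $F,G$ being those of $f,g$ with multiplicity scaled by $n$), and the hypothesis on $\mathcal{S}_1$ gives sharing of the zero set with weight $k_1$. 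Since the derivative of $z^{n-1}(z+a)$ vanishes at $z=0$ and at $z=\tfrac{a(1-n)}{n}$, the critical values of $F$ are $0$ and a specific nonzero constant; this is precisely why the deficiency $\Theta(\tfrac{a(1-n)}{n};\cdot)$ enters the hypothesis $\Theta_f+\Theta_g>0$.

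I would then introduce Lahiri's differential expression
\[
H = \left(\frac{F''}{F'} - \frac{2F'}{F-1}\right) - \left(\frac{G''}{G'} - \frac{2G'}{G-1}\right),
\]
and dichotomize on whether $H \equiv 0$. In the case $H \not\equiv 0$, the classical weighted-sharing pole-analysis bounds $N(r,H)$ by reduced counting functions at $F = 0, 1, \infty$ (and at $G = 0, 1, \infty$), with the contribution from shared $1$-points of multiplicity $\le k_2$ absorbed by $\ol N(r,\tfrac{1}{F-1}) - N_E^{(k_2+1}(r,\tfrac{1}{F-1})$, and similarly for the zero- and pole-sharings at weights $k_1, k_3$. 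Feeding this into the second fundamental theorem applied to $F$ and $G$, using the identity $T(r,F) = n\,T(r,f) + O(1)$, and re-expressing the $\mathcal{S}_1$-contribution via $\Theta(\tfrac{a(1-n)}{n};f)$, one obtains an inequality for $T(r,f)+T(r,g)$ whose right-hand coefficient becomes strictly smaller than the left exactly when the arithmetic condition on $k_1,k_2,k_3$ together with $\Theta_f+\Theta_g>0$ holds. The resulting contradiction forces $H \equiv 0$.

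In the case $H \equiv 0$, two integrations yield the linear-fractional relation $\tfrac{1}{F-1} = \tfrac{A}{G-1} + B$ for constants $A \neq 0$ and $B$. Splitting into the sub-cases $(B\neq 0,\, A\neq B)$, $(B\neq 0,\, A = B)$, $(B = 0,\, A\neq 1)$ and $(B = 0,\, A = 1)$, and applying the second fundamental theorem together with the deficiency hypothesis (using $n \ge 4$ to produce the pole/zero excess needed for elimination), all non-trivial sub-cases are ruled out, leaving $F \equiv G$. The identity $f^{n-1}(f+a) \equiv g^{n-1}(g+a)$ then yields $f \equiv g$ by the standard Yi-type argument: setting $h = f/g$, the resulting polynomial relation combined with the $\mathcal{S}_1$- and $\mathcal{S}_3$-sharing forces $h \equiv 1$. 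The principal difficulty is Case~1: extracting the sharp arithmetic threshold $3k_1k_2k_3 > k_2 + 3k_1 + k_3 - 2k_2k_3 + 4$ from the weighted counting-function inequalities requires careful bookkeeping of which contributions are absorbed at each weight, and any slack in that accounting would propagate into a strictly weaker condition on the weights.
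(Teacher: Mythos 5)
You should note first that this paper does not prove the statement you were given: Theorem G is quoted verbatim from Banerjee--Majumder \cite{Ban & Maj-A-2014}, so there is no in-paper proof to compare against; the closest internal analogue is the proof of Theorem \ref{t1.2}, which runs on exactly the machinery you describe ($\mathcal{F},\mathcal{G}$ as in (\ref{e2.1}), Lahiri's $\mathcal{H}$, the dichotomy $\mathcal{H}\not\equiv 0$ versus $\mathcal{H}\equiv 0$, integration to $\frac{1}{\mathcal{F}-1}=\frac{\mathcal{A}}{\mathcal{G}-1}+\mathcal{B}$, and a final algebraic case analysis). So your choice of framework --- $F=-f^{n-1}(f+a)/b$, translation of the three set-sharing hypotheses into weighted value-sharing of $1$, $0$ and $\infty$, and the observation that the critical value $a(1-n)/n$ is why $\Theta\left(\frac{a(1-n)}{n};\cdot\right)$ appears --- is the right one and matches the standard template.

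However, as a proof your submission has genuine gaps rather than omitted routine detail. In Case 1 you never carry out the weighted bookkeeping: the entire content of the theorem is the exact threshold $3k_1k_2k_3>k_2+3k_1+k_3-2k_2k_3+4$ together with $\Theta_f+\Theta_g>0$, and you only assert that ``one obtains an inequality whose right-hand coefficient becomes strictly smaller than the left exactly when'' this holds, explicitly conceding that any slack would weaken the condition; without the actual estimates (which terms of $N(r,\mathcal{H})$ are absorbed at weight $k_1$, $k_2$, $k_3$, how the pole and $\frac{a(1-n)}{n}$-point contributions are converted into the deficiency sum, and how $T(r,F)=nT(r,f)+O(1)$ with $n\geq 4$ closes the inequality) nothing is proved. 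Second, you never locate where $\Theta_f+\Theta_g>0$ is actually used; in arguments of this type it is needed not only in Case 1 but also to kill subcases of $\mathcal{H}\equiv 0$, and your four-way split in $(A,B)$ is again only named, not executed. Third, the endgame is not a one-liner: from $f^{n-1}(f+a)\equiv g^{n-1}(g+a)$, setting $h=f/g$ gives $g\left(h^{n}-1\right)=a\left(1-h^{n-1}\right)$, and one must separately exclude nonconstant $h$ (via the second fundamental theorem applied to $h$ at the $n$-th roots of unity, using the pole-sharing and the deficiency hypothesis) and constant $h\neq 1$; ``the standard Yi-type argument forces $h\equiv 1$'' is an appeal to exactly the step that needs proof. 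In short, you have reproduced the correct strategy (consistent with both the cited source and this paper's own Theorem \ref{t1.2}), but the quantitative core that distinguishes Theorem G from a generic sharing statement is missing.
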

\begin{theoH}\cite{Ban & Maj-A-2014}
	Let $\mathcal{S}_1=\{0\}$, $\mathcal{S}_2=\{z:z^n+az^{n-1}+b=0\}$ and $\mathcal{S}_3=\{\infty\}$, where $a, b$ are non-zero constants such that $z^n+az^{n-1}+b=0$ has no repeated root and $n(\geq 3)$ be an integer. If for two non-constant meromorphic functions $f$ and $g$, $E_{f}(\mathcal{S}_1,k_1)=E_{g}(\mathcal{S}_1,k_1)$, $E_{f}(\mathcal{S}_2,k_2)=E_{g}(\mathcal{S}_2,k_2)$ and $E_{f}(\mathcal{S}_3,k_3)=E_{g}(\mathcal{S}_3,k_3)$, where $k_1\geq 0$, $k_2\geq 4$, $k_3\geq 1$ are integers satisfying \beas 2k_1k_2k_3>k_2+2k_1+k_3-k_2k_3+3\;\;\text{and}\;\; \Theta_{f}+\Theta_{g}>1, \eeas  where $\Theta_{h}=\Theta(\infty;h)+\Theta\left(\displaystyle\frac{a(1-n)}{n};h\right)$ then $f\equiv g$. 
\end{theoH}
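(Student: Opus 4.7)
My plan is to recode the set-sharing of $\mathcal{S}_{2}$ into a value-sharing problem via the auxiliary functions $F=-\displaystyle\frac{f^{n-1}(f+a)}{b}$ and $G=-\displaystyle\frac{g^{n-1}(g+a)}{b}$. Because $z^{n}+az^{n-1}+b=0$ is equivalent to $z^{n-1}(z+a)=-b$, the hypothesis $E_{f}(\mathcal{S}_{2},k_{2})=E_{g}(\mathcal{S}_{2},k_{2})$ translates into $E_{F}(1,k_{2})=E_{G}(1,k_{2})$, so $F$ and $G$ share $1$ with weight $k_{2}\geq 4$. Zeros of $F$ occur only at zeros and $(-a)$-points of $f$, and poles of $F$ only at poles of $f$; thus the weighted sharing of $\mathcal{S}_{1}$ and $\mathcal{S}_{3}$ is faithfully transported to information about $F,G$ at $0$ and $\infty$.

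Next I would introduce the classical Lahiri-type operator
\beas H=\left(\displaystyle\frac{F''}{F'}-\displaystyle\frac{2F'}{F-1}\right)-\left(\displaystyle\frac{G''}{G'}-\displaystyle\frac{2G'}{G-1}\right)\eeas
and aim to prove $H\equiv 0$. By the logarithmic derivative lemma, $m(r,H)=S(r,F)+S(r,G)$, so it suffices to bound $N(r,H)$. The potential poles of $H$ come from (a) $1$-points of $F$ whose multiplicities in $F$ and $G$ fail to match within the allowance of weight $k_{2}$, (b) poles of $f,g$ with mismatched orders (controlled by $k_{3}$), (c) zeros of $f,g$ with mismatched orders (controlled by $k_{1}$), and (d) the $(-a)$-points of $f,g$ together with the $a(1-n)/n$-points, the latter being the non-zero critical point of $P(z)=z^{n}+az^{n-1}+b$. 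Assembling these reduced-counting contributions and invoking the second fundamental theorem applied to $F$ and $G$ should yield a contradiction unless the arithmetical relation $2k_{1}k_{2}k_{3}>k_{2}+2k_{1}+k_{3}-k_{2}k_{3}+3$ fails; this inequality is precisely what forces $H\equiv 0$.

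Once $H\equiv 0$, two successive integrations produce $\displaystyle\frac{1}{F-1}=\displaystyle\frac{A}{G-1}+B$ for constants $A\neq 0$ and $B$, i.e.\ a M\"obius relationship between $F$ and $G$. I would split into the standard sub-cases ($B=0$ with $A=1$ or $A\neq 1$, and $B\neq 0$) according to which Picard-type exceptional values of $f,g$ emerge. In each non-trivial case the critical value condition makes $f=a(1-n)/n$ produce a double $1$-point of $F$, and the hypothesis $\Theta(\infty;h)+\Theta(a(1-n)/n;h)$ summed over $h\in\{f,g\}$ exceeding $1$ rules out every alternative by a direct deficiency estimate, leaving only $F\equiv G$. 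From $f^{n-1}(f+a)\equiv g^{n-1}(g+a)$, combined with the sharing of $0$ and $\infty$ by $f,g$ and the hypothesis $n\geq 3$, a routine algebraic argument on the ratio $f/g$ (a meromorphic function all of whose zeros and poles are constrained) then yields $f\equiv g$.

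The hardest step, in my judgment, is the pole counting of $H$ that produces the sharp arithmetical inequality $2k_{1}k_{2}k_{3}>k_{2}+2k_{1}+k_{3}-k_{2}k_{3}+3$. The bookkeeping must partition the $1$-points of $F-1$ and $G-1$ according to whether multiplicities match exactly (no contribution to $N(r,H)$) or exceed the weight (contribution with reduced multiplicity), and similarly for zeros at $0$ and poles at $\infty$; only when these three weighted-sharing estimates are combined optimally, via the refined counting inequalities of Lahiri, does the stated numerical threshold arise as the exact break point. The subsequent M\"obius case analysis is technically detailed but follows well-worn lines and should not present essential difficulty.
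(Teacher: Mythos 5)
This statement is \textsc{Theorem H}, which the paper merely quotes from \emph{Banerjee -- Majumder} \cite{Ban & Maj-A-2014} as background; the present paper contains no proof of it, so there is nothing in-house to compare with. Your outline does follow the framework that both the cited paper and the present paper's own proof of Theorem \ref{t1.2} use: pass to $F=-f^{n-1}(f+a)/b$, $G=-g^{n-1}(g+a)/b$ so that set-sharing of $\mathcal{S}_2$ becomes weighted sharing of the value $1$, control the Lahiri function $H$, and finish with the M\"obius relation $\frac{1}{F-1}=\frac{A}{G-1}+B$. So the strategy is the expected one.

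However, as a proof it has two genuine gaps. First, the entire content of the theorem is the exact threshold $2k_1k_2k_3>k_2+2k_1+k_3-k_2k_3+3$ together with the choice $k_2\geq 4$, $\Theta_f+\Theta_g>1$ (contrast Theorem G, where the same machinery gives a different inequality, $k_2\geq 3$ and $\Theta_f+\Theta_g>0$); you assert that assembling the weighted-sharing contributions to $N(r,H)$ against the second fundamental theorem ``should'' yield precisely this inequality, but you never perform the count, and it is exactly in that bookkeeping (which terms carry factors of the form $\tfrac{1}{k_1+1}$, $\tfrac{1}{k_2+1}$, $\tfrac{1}{k_3+1}$, and how the $1$-points with mismatched multiplicities are absorbed) that the stated bound either does or does not emerge. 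Declaring the hardest step to be the place ``where the threshold arises'' is a roadmap, not a proof. Second, the endgame is understated and partly wrong. Even in the cleanest case $F\equiv G$, i.e.\ $f^{n-1}(f+a)\equiv g^{n-1}(g+a)$, the passage to $f\equiv g$ is not a ``routine algebraic argument on the ratio $f/g$'': setting $h=f/g$, a nonconstant $h$ gives the genuine parametrization $g\equiv\dfrac{a(1-h^{n-1})}{h^{n}-1}$, and excluding it requires value-distribution estimates (second fundamental theorem applied to $h$, the sharing of $\infty$, and the hypothesis $\Theta_f+\Theta_g>1$); the same deficiency hypothesis is what disposes of the cases $B\neq 0$ and $A\neq 1$, which you dismiss as well-worn. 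Moreover your remark that ``$f=a(1-n)/n$ produces a double $1$-point of $F$'' is incorrect: since $z^{n}+az^{n-1}+b$ has no repeated root, $a(1-n)/n$ is not a root of it, so such points are not $1$-points of $F$ at all; the point $a(1-n)/n$ matters because it is the nonzero critical point of $z^{n}+az^{n-1}$, so $F-d$ (for the corresponding critical value $d$) has only multiple zeros there, which is how $\Theta\left(\frac{a(1-n)}{n};\cdot\right)$ enters the M\"obius case analysis. Until the counting argument and this final exclusion are actually carried out, the proposal does not establish the theorem.
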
\par Earlier the problem of finding the possible answer of the \emph{\sc Question \ref{qn1.2}} was solved by \emph{Lin - Yi} \cite{Lin & Yi-TA-2003} who answered the \emph{\sc Question \ref{qn1.2}} by considering the sets $\mathcal{S}_1=\{0\}$, $\mathcal{S}_2=\{z:az^n-n(n-1)z^2+2n(n-2)bz=(n-1)(n-2)b^2\}$ and $\mathcal{S}_3=\{\infty\}$ for $n\geq 5$, where $a, b$ are constants such that $ab^{n-2}\neq 0, 2$.\par In \cite{Ban & Aha-BPAS-2014}, \emph{Banerjee - Ahamed}  modified the sets $\mathcal{S}_1$, $\mathcal{S}_2$ so that $\mathcal{S}_1=\{0,1\}$, and the number of elements in the new set $\mathcal{S}_2$ has decreased by $1$ in the optimal case. Moreover the conditions on the sharing sets $\mathcal{S}_{j}$, $(j=1,2,3)$ has also been relaxed to the conditions of sharing $(\mathcal{S}_{j},k_{j})$, $(j=1,2,3)$, where $(k_1,k_2,k_3)=(0,3,2), (0,4,1)$.\par From the above discussions, we have the following notes:
\begin{note}
	The lower bound of the cardinality of the main range set $ \mathcal{S}_2 $ is obtained so far in \emph{\sc Theorems A, B, D, E, H} and also in the result of \emph{Qiu -Fang} \cite{Qiu & Fan-BSMS-2002} is $3$ with the help of some extra suppositions.
\end{note}
\begin{note}
	Also one may check that the optimal choice for the weights $(k_1,k_2,k_3)=(0,3,1)$ in \emph{\sc Theorem G} can not be considered as it violates the condition $3k_1k_2k_3>k_2+3k_1+k_3-2k_2k_3+4$. 
\end{note}
\begin{note}
	We also see that in \emph{\sc Theorem H}, it is not possible to consider the weights as $(k_1,k_2,k_3)=(0,4,1)$ and hence as $(k_1,k_2,k_3)=(0,3,1)$.
\end{note}
    Based on the above observation, for the purpose of improving  all the above mentioned results further, one can ask the following question.
\begin{ques}\label{qn1.3}
	Can we obtain a uniqueness result corresponding to \emph{\sc Theorems A, B, D, E, H} and  \emph{Qiu -Fang} \cite{Qiu & Fan-BSMS-2002} without the help of any extra  suppositions in which the lower bound of the cardinality of the main range set will be $3$ ?
\end{ques}\par If the answer of the \emph{\sc Question \ref{qn1.3}} is found to be affirmative, then one natural question is as follows. 
\begin{ques}\label{qn1.4}
	Is it possible to reduce further the choice of the weights $(k_1,k_2,k_3)$ to $(0,3,1)$ in all the above mentioned results ? 
\end{ques}\par Answering \emph{\sc Questions \ref{qn1.2}}, \emph{\sc \ref{qn1.3}} and \emph{\sc\ref{qn1.4}} affirmatively is the main motivation of writing this paper. In this paper, we have modified the sets $\mathcal{S}_1=\{0\}$ by $\mathcal{S}_1=\{0,\delta_{a,b}^n\}$ and $\mathcal{S}_2$ by an new one and obtained two results out of which the second one directly improves all the above mentioned results.\par  To this end, we next suppose that $\displaystyle\delta_{a,b}^n=\frac{b(1-n)}{na},$ where $a, b\in\mathbb{C^{*}}$ and $n\geq 3$ be an integer. We consider here the $ \mathcal{S}_1=\{0,\delta_{a,b}^n\} $ as the set of zeros of the derivative of the polynomial $ az^n+bz^{n-1}+c. $

\begin{theo}\label{t1.2}
For $n\geq 3$, let $\mathcal{S}_1=\{0,\delta_{a,b}^n\}$, $\mathcal{S}_2=\{z:az^n+bz^{n-1}+c=0\}$ and $\mathcal{S}_3=\{\infty\}$, where $a, b, c\in\mathbb{C^{*}}=\mathbb{C}\setminus\{0\}$ be so chosen that $az^n+bz^{n-1}+c=0$ has no repeated root, $c\neq\displaystyle -\frac{b}{2n}\left(\delta_{a,b}^n\right)^{n-1}$. If for two non-constant meromorphic functions $f$ and $g$, $E_{f}(\mathcal{S}_1,0)=E_{g}(\mathcal{S}_1,0)$, $E_{f}(\mathcal{S}_2,n)=E_{f}(\mathcal{S}_2,n)$ and $E_{f}(\mathcal{S}_3,n-2)=E_{f}(\mathcal{S}_3,n-2)$, then $f\equiv g$.
\end{theo}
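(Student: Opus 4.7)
My approach is to follow the weighted-sharing blueprint developed by Lahiri, Banerjee--Mukherjee, and Banerjee--Majumder, exploiting the crucial observation that $\mathcal{S}_1=\{0,\delta_{a,b}^n\}$ is exactly the zero set of $P'(z)$ for $P(z)=az^n+bz^{n-1}+c$. I set
\[
F=-\frac{1}{c}(af^n+bf^{n-1}),\qquad G=-\frac{1}{c}(ag^n+bg^{n-1}),
\]
so $F-1=-P(f)/c$ and $F'=-\frac{na}{c}f^{n-2}(f-\delta_{a,b}^n)f'$. The hypothesis on $\mathcal{S}_2$ becomes $E_F(1,n)=E_G(1,n)$; the hypothesis on $\mathcal{S}_3$ transfers to a pole-sharing statement between $F$ and $G$ with weight effectively amplified because each $m$-fold pole of $f$ produces an $nm$-fold pole of $F$; and the $\mathcal{S}_1$ sharing says exactly that the zero sets of $f^{n-2}(f-\delta_{a,b}^n)$ and $g^{n-2}(g-\delta_{a,b}^n)$ coincide as point sets. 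It is this third reformulation --- the very reason for enlarging $\mathcal{S}_1$ from the earlier $\{0\}$ to include the second critical point of $P$ --- that controls the ``bad'' zeros of $F'$ and $G'$ and makes the minimal weight $k_1=0$ sufficient.

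The heart of the proof is the Lahiri--type auxiliary function
\[
H=\left(\frac{F''}{F'}-\frac{2F'}{F-1}\right)-\left(\frac{G''}{G'}-\frac{2G'}{G-1}\right).
\]
Standard local residue computations at common $1$-points, common poles, and common critical pre-images show that, under the three weighted-sharing hypotheses, the only uncancelled contributions to $N(r,H)$ come from: $1$-points whose multiplicities in $F$ and $G$ disagree (which, by weight $n$, must then exceed $n$); poles whose multiplicities disagree (which, by weight $n-2$ amplified by the $n$-fold inflation, must be large); and a reduced count of extra critical values of $P(f)$ and $P(g)$, all of which can be absorbed into $S(r,f)+S(r,g)$. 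Combining this with $m(r,H)=S(r,f)+S(r,g)$ from the lemma on the logarithmic derivative and the second fundamental theorem applied to $F$ and $G$ at the three values $\{0,1,\infty\}$, one arrives at a Nevanlinna inequality that, under $k_1=0$, $k_2=n$, $k_3=n-2$ and $n\geq 3$, is only consistent with $H\equiv 0$.

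Once $H\equiv 0$, integrating twice gives a bilinear relation
\[
\frac{1}{F-1}=\frac{A}{G-1}+B,\qquad A\in\mathbb{C}^{*},\ B\in\mathbb{C},
\]
and the standard trichotomy $(A,B)=(1,0)$, $A=1$ with $B\ne 0$, and $A\ne 1$ must be settled. In the first case $F\equiv G$, i.e., $P(f)\equiv P(g)$; writing $P(f)-P(g)=(f-g)\,Q(f,g)$ for a polynomial $Q$ of degree $n-1$ in each variable and using the IM sharing of $\mathcal{S}_1$ once more to exclude $Q(f,g)\equiv 0$, we obtain $f\equiv g$. The remaining two cases are to be eliminated by Nevanlinna deficiency comparisons on both sides of the bilinear relation, and the nondegeneracy assumption $c\ne -\frac{b}{2n}(\delta_{a,b}^n)^{n-1}$ is exactly what rules out the accidental identity $P(0)+P(\delta_{a,b}^n)=0$ that would otherwise be compatible with an identity of the form $(F-1)(G-1)\equiv\text{const}$. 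The main obstacle, I expect, is the weight-$0$ bookkeeping at $\mathcal{S}_1$: converting the set-theoretic coincidence of the critical pre-images of $P$ into Nevanlinna estimates sharp enough to tolerate the minimal weights $(0,n,n-2)$ --- rather than anything strictly larger --- is the delicate step that drives the whole argument.
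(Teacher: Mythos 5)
Your setup ($\mathcal{F},\mathcal{G}$, the Lahiri function $H$, and the dichotomy $H\not\equiv 0$ versus $H\equiv 0$) coincides with the paper's, but at both decisive points the proposal has genuine gaps rather than arguments. In the case $H\not\equiv 0$ you claim that the surviving contributions to $N(r,H)$ (high-multiplicity $1$-points, high-multiplicity poles, extra critical points) ``can be absorbed into $S(r,f)+S(r,g)$''. That is not true: terms like $\ol N\left(r,\frac{1}{\mathcal{F}-1}\mid\geq n+1\right)$ and $\ol N(r,f\mid\geq n-1)$ are not error terms, they are only bounded by the fractions $\frac{1}{n+1}N\left(r,\frac{1}{\mathcal{F}-1}\right)$ and $\frac{1}{n-1}N(r,f)$ of the characteristic functions, and the contradiction in the paper comes from the resulting coefficient $1+\frac{1}{n-1}+\frac{n(n+3)}{2(n+1)}$ being smaller than $n+1$ for $n\geq 3$. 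Moreover $N(r,H)$ also receives the zeros of $f$, $f-\delta_{a,b}^n$, $g$, $g-\delta_{a,b}^n$ (zeros of $\mathcal{F}'$, $\mathcal{G}'$); the paper controls exactly these through a second auxiliary function $\Psi=\frac{\mathcal{F}'}{\mathcal{F}-1}-\frac{\mathcal{G}'}{\mathcal{G}-1}$ (Lemma \ref{lem2.3}), which converts the weight-$0$ sharing of $\mathcal{S}_1$ into the bound $\ol N\left(r,\frac{1}{f}\right)+\ol N\left(r,\frac{1}{f-\delta_{a,b}^n}\right)\leq \ol N\left(r,\frac{1}{\mathcal{F}-1}\mid\geq n+1\right)+\ol N(r,f\mid\geq n-1)+S(r,f)$. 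You explicitly defer precisely this ``weight-$0$ bookkeeping'' as ``the main obstacle, I expect'', so the heart of Case 1 is missing from your proof.

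In the case $H\equiv 0$ your trichotomy in $(A,B)$ does not track where the hypotheses actually act. After ruling out $\infty$ as a Picard-exceptional value of $f$ (again via the $\Psi$-estimate plus the second fundamental theorem) and using the shared poles to force $B=0$, the real case split is on how the two-element set $\mathcal{S}_1$ is shared: either $f,g$ share $0$ and $\delta_{a,b}^n$ separately, or crosswise. In the direct case one gets $A=1$ and $f^{n-1}(af+b)\equiv g^{n-1}(ag+b)$, and the paper concludes through the sharing of the four values $0,\delta_{a,b}^n,-\frac{b}{a},\infty$ and the M\"obius lemma (Lemma \ref{lem2.4}), not through your factorization $P(f)-P(g)=(f-g)Q(f,g)$; excluding $Q(f,g)\equiv 0$ is exactly the kind of step that forced deficiency hypotheses in the earlier theorems, and you only assert it. The crossed cases, which your sketch omits entirely, are where the hypothesis $c\neq-\frac{b}{2n}\left(\delta_{a,b}^n\right)^{n-1}$ is actually used (it forbids $A=\pm 1$ there, since $A$ is forced to equal $\frac{a(\delta_{a,b}^n)^n+b(\delta_{a,b}^n)^{n-1}+c}{c}$ or its reciprocal), together with Lemma \ref{lem2.5} applied to $f^{*}=\frac{f}{f-\delta_{a,b}^n}$ and $g^{*}=\frac{g-\delta_{a,b}^n}{g}$; without treating them, Case 2 is also incomplete.
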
  
\begin{cor}\label{c1.2}
	Let $\mathcal{S}_1=\bigg\{0,-\displaystyle\frac{2b}{3a}\bigg\}$, $\mathcal{S}_2=\{z:az^3+bz^2+c=0\}$ and $\mathcal{S}_3=\{\infty\}$, where $a, b, c\in\mathbb{C^{*}}$ be so chosen that $az^3+bz^2+c=0$ has no repeated root, $c\neq -\displaystyle\frac{2b^3}{27a^2}$. If for two non-constant meromorphic functions $f$ and $g$, $E_{f}(\mathcal{S}_1,0)=E_{g}(\mathcal{S}_1,0)$, $E_{f}(\mathcal{S}_2,3)=E_{f}(\mathcal{S}_2,3)$ and $E_{f}(\mathcal{S}_3,1)=E_{f}(\mathcal{S}_3,1)$, then $f\equiv g$.
\end{cor}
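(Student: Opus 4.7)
The plan is to follow the standard Fang--Xu / Lin--Yi / Banerjee--Mukherjee transfer technique, in which the set-sharing hypothesis on $\mathcal{S}_2$ is converted into a value-sharing hypothesis for two auxiliary functions. Set
\beas
F=-\frac{1}{c}\bigl(af^n+bf^{n-1}\bigr), \qquad G=-\frac{1}{c}\bigl(ag^n+bg^{n-1}\bigr),
\eeas
so that $F-1=-\tfrac{a}{c}\prod_{j=1}^{n}(f-w_j)$, where $w_1,\dots,w_n$ are the (distinct) roots of $P(z)=az^n+bz^{n-1}+c$. Since $P$ has no repeated roots, neither $0$ nor $\delta_{a,b}^n$ (the zeros of $P'$) belongs to $\mathcal{S}_2$, and the factorisation
\beas
F'=-\frac{na}{c}\,f^{n-2}\bigl(f-\delta_{a,b}^n\bigr) f'
\eeas
together with its analogue for $G$, shows that the $\mathcal{S}_1$-points of $f$ and $g$ produce zeros of $F'$ and $G'$ that are disjoint from the $1$-points of $F$ and $G$. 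Under these translations, the three hypotheses become: $F$ and $G$ share $1$ with weight $n$; $f$ and $g$ share $\infty$ with weight $n-2$; and $f,g$ have a common set of $\mathcal{S}_1$-points ignoring multiplicity.

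The heart of the proof would then be a dichotomy based on the standard auxiliary function
\beas
H=\frac{F''}{F'}-\frac{2F'}{F-1}-\frac{G''}{G'}+\frac{2G'}{G-1}.
\eeas
Assuming $H\not\equiv 0$, I would bound the poles of $H$ via Lahiri-type reduced counting functions: they can arise only from (a) $1$-points of $F,G$ with mismatched multiplicities after weight-$n$ sharing, (b) simple poles of $f,g$ not equated by weight-$(n-2)$ sharing, and (c) zeros of $F'$ and $G'$ off $\{F=1\}$. The IM sharing of $\mathcal{S}_1$ is precisely what kills the dominant part of (c), because the $f$-zeros (of multiplicity $n-2$ in $F'$) and the $\delta_{a,b}^n$-points of $f$ pair up one-to-one with the corresponding points for $g$, so they contribute identically to $F'/F'$ and $G'/G'$ and cancel in $H$. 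Feeding the resulting bound into Nevanlinna's second fundamental theorem applied to $F$ and $G$ with targets $0,1,\infty$, and using $T(r,F)=nT(r,f)+S(r,f)$, a contradiction emerges as soon as $n\ge 3$, without needing any deficiency hypothesis.

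Once $H\equiv 0$, two integrations give
\beas
\frac{1}{F-1}=\frac{A}{G-1}+B,
\eeas
with $A\in\mathbb{C}^{*}$ and $B\in\mathbb{C}$. I would then analyse the three subcases $\{B\ne 0,\,A\ne -B\}$, $\{B\ne 0,\,A=-B\}$, and $\{B=0\}$ separately. The first two should be excluded by Nevanlinna estimates exploiting the pole sharing of $f,g$ and the cardinality $n\ge 3$ of $\mathcal{S}_2$; in the third case a further estimate rules out $A\ne 1$ and leaves $F\equiv G$, i.e.\ $af^{n-1}(af+b)\equiv ag^{n-1}(ag+b)$. The final descent $F\equiv G\Rightarrow f\equiv g$ is then an algebraic step: the hypothesis $c\ne -\tfrac{b}{2n}(\delta_{a,b}^n)^{n-1}$ is exactly the condition that rules out the exceptional polynomial relations (of the type corresponding to a common critical value being attained) that would otherwise permit $f\not\equiv g$ while preserving $F\equiv G$.

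The main obstacle I anticipate is the first step, namely forcing $H\equiv 0$ under the very weak weight $k_1=0$ on $\mathcal{S}_1$ and the borderline weight $k_3=n-2$ on $\mathcal{S}_3$; already when $n=3$ these are $(0,n,1)$, so the $\ol N_{(2}$ and $\ol N_E^{(n+1}$ bookkeeping has essentially no slack. It is precisely the enlargement of $\mathcal{S}_1$ from $\{0\}$ to $\{0,\delta_{a,b}^n\}$, which makes $\mathcal{S}_1$ the full critical set of $P$ and accounts for every zero of $F'$ arising from $f$-zeros and $\delta_{a,b}^n$-points in a paired way, that supplies the missing margin and lets the proof close without any $\Theta$ assumption of the kind appearing in Theorems A--H.
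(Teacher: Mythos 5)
Your overall architecture (the functions $\mathcal{F},\mathcal{G}$, the auxiliary $\mathcal{H}$, the dichotomy $\mathcal{H}\not\equiv 0$ versus $\mathcal{H}\equiv 0$, and integration in the second case) is the same as the paper's proof of Theorem \ref{t1.2}, of which the stated corollary is just the case $n=3$. But there is a genuine gap at the decisive point of Case 1. You claim that the IM sharing of $\mathcal{S}_1$ makes the $\mathcal{S}_1$-points ``contribute identically'' to $\mathcal{F}''/\mathcal{F}'$ and $\mathcal{G}''/\mathcal{G}'$ and hence cancel in $\mathcal{H}$. Under weight-$0$ sharing of the \emph{set} $\{0,\delta_{a,b}^n\}$ this is false: at a common point the multiplicities of $f$ and $g$ may differ, and a zero of $f$ may even be a $\delta_{a,b}^n$-point of $g$; since $\mathcal{F}'=-\frac{na}{c}f^{n-2}(f-\delta_{a,b}^n)f'$, the orders of vanishing of $\mathcal{F}'$ and $\mathcal{G}'$ at such a point (e.g.\ $(n-1)p-1$ versus $2q-1$) generally disagree, so $\mathcal{H}$ really does have poles there. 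The paper therefore counts these points inside $N(r,\mathcal{H})$ (estimate (\ref{e3.1})) and recovers the lost ground through a second auxiliary function, $\Psi=\frac{\mathcal{F}'}{\mathcal{F}-1}-\frac{\mathcal{G}'}{\mathcal{G}-1}$ (Lemma \ref{lem2.3}): every shared $\mathcal{S}_1$-point is a zero of $\Psi$, which yields $\ol N(r,1/f)+\ol N\big(r,1/(f-\delta_{a,b}^n)\big)\le \ol N\big(r,\frac{1}{\mathcal{F}-1}\mid\ge n+1\big)+\ol N(r,f\mid\ge n-1)+S(r,f)$. Without this device (or an equivalent one) your second-fundamental-theorem bookkeeping for $F,G$ with targets $0,1,\infty$ has, as you yourself note, no slack at $n=3$, and the contradiction does not close.

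Case 2 is also underdeveloped in a way that matters. Because $\mathcal{S}_1$ is shared only as a set, the identification may be crossed ($E_f(0,0)=E_g(\delta_{a,b}^n,0)$ and $E_f(\delta_{a,b}^n,0)=E_g(0,0)$), and this is where the hypothesis $c\neq-\frac{b}{2n}(\delta_{a,b}^n)^{n-1}$ (i.e.\ $c\neq-\frac{2b^3}{27a^2}$ for $n=3$) is actually used: it rules out $\mathcal{A}=\pm1$ in the crossed subcases, while the totally crossed-and-nonempty situation is handled via Lemma \ref{lem2.5} applied to $f^{*}=\frac{f}{f-\delta_{a,b}^n}$, $g^{*}=\frac{g-\delta_{a,b}^n}{g}$. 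Moreover, in the straight case the descent from $\mathcal{F}\equiv\mathcal{G}$, i.e.\ $f^{n-1}(af+b)\equiv g^{n-1}(ag+b)$, to $f\equiv g$ is not ``an algebraic step'': the paper deduces that $f,g$ then share the four values $0,\delta_{a,b}^n,-b/a,\infty$ and invokes the M\"obius lemma (Lemma \ref{lem2.4}) before concluding. Your plan of splitting only on $(A,B)$ does not engage with the crossed identification at all, so as written the argument is incomplete both in Case 1 and in Case 2.
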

\begin{rem}
	Clearly \emph{\sc Corollary \ref{c1.2}}\; directly improves the above mentioned results as we see that the lower bound of $ n $ is $ 3 $, with the corresponding weights $( k_1, k_2, k_3 )=(0, 3, 1)$.
\end{rem}\par The following example shows that the conclusions of the   \emph{\sc Theorems \ref{t1.2}}\; cease to be hold if we consider $c=\displaystyle -\frac{b}{2n}\left(\delta_{a,b}^n\right)^{n-1}$.
\begin{exm}
	Let $a=1, b=3$, then $\displaystyle -\frac{b}{2n}\left(\delta_{a,b}^n\right)^{n-1}=-2$. Let $c=\displaystyle -\frac{b}{2n}\left(\delta_{a,b}^n\right)^{n-1}=-2$ and $\mathcal{S}_2=\{z:z^3+3z^2-2=0\}=\{-1, -1-\sqrt{3},-1-\sqrt{3}\}$ and $\mathcal{S}_3=\{\infty\}$. Hence we must have $\mathcal{S}_1=\{0,\delta_{a,b}^n\}=\{0,-2\}$. Let $f(z)=\phi(z)-2$ and $g(z)=-\phi(z)$, where $\phi(z)$ is a non-constant meromorphic function. It is clear that $E_{f}(\mathcal{S}_{j})=E_{g}(\mathcal{S}_{j})$ for $j=1,2,3$ and hence $E_{f}(\mathcal{S}_{1},0)=E_{g}(\mathcal{S}_{1},0)$, $E_{f}(\mathcal{S}_{2},3)=E_{g}(\mathcal{S}_{2},3)$ and $E_{f}(\mathcal{S}_{3},1)=E_{g}(\mathcal{S}_{3},1)$ but note that $f\not\equiv g$.  
\end{exm}
\par The next example shows the sharpness of the cardinalities of the set $\mathcal{S}_1$ and the main range set $\mathcal{S}_2$ in the  \emph{\sc Theorem \ref{t1.2}}.
\begin{exm}
	Let $\mathcal{S}_2=\{z:az^2+bz+c=0\}=\{\gamma,\delta\}$, where $\gamma+\delta=-\displaystyle\frac{b}{a}$, $\gamma\delta=\displaystyle\frac{c}{a}$, $a,b,c\in\mathbb{C^{*}}$, $c\neq\displaystyle\frac{b^2}{8a}$. Hence $\mathcal{S}_1=\bigg\{-\displaystyle\frac{b}{2a}\bigg\}=\bigg\{\displaystyle\frac{\gamma+\delta}{2}\bigg\}$. Let $\mathcal{S}_3=\{\infty\}$ and $f(z)=h(z)+\gamma+\delta$ and $g(z)=-h(z)$, where $h(z)$ is any non-constant meromorphic function. We see that all the conditions of  \emph{\sc Theorem \ref{t1.2}} are satisfied but $f\not\equiv g$.
\end{exm} 
\par The following example shows that,  the condition $b\neq 0$, in \emph{\sc Theorem \ref{t1.2}},\; can not be removed.
\begin{exm}
	Let $b=0$, then $\delta_{a,b}^n=0$. Thus, we get $\mathcal{S}_1=\{0\}$. \beas \text{Let}\;\; \mathcal{S}_2=\{z:az^3+c=0\}=\bigg\{\sqrt[3]{-\displaystyle\frac{c}{a}}, \sqrt[3]{-\displaystyle\frac{c}{a}}\omega, \sqrt[3]{-\displaystyle\frac{c}{a}}\omega^2\bigg\},\eeas $a,c\in\mathbb{C^{*}}$, where $ \omega $ is a cube roots of unity and $\mathcal{S}_3=\{\infty\}$. Let $f(z)$ be a non-constant meromorphic function and $g(z)=\omega\; f(z)$, where $\omega$ is a non-real cube root of unity. It is clear that   $E_{f}(\mathcal{S}_{1},0)=E_{g}(\mathcal{S}_{1},0)$, $E_{f}(\mathcal{S}_{2},3)=E_{g}(\mathcal{S}_{2},3)$ and $E_{f}(\mathcal{S}_{3},1)=E_{g}(\mathcal{S}_{3},1)$  but $f\not\equiv g$.
\end{exm}
\par The next two examples show that the set $\mathcal{S}_2$ considered in \emph{\sc Theorem \ref{t1.2}} can not be replaced by any arbitrary set.
\begin{exm}
	Let $\mathcal{S}_1=\bigg\{\displaystyle\frac{6-\sqrt{3}}{3}, \frac{6+\sqrt{3}}{3}\bigg\}$, \beas \mathcal{S}_2=\bigg\{z:z^3-6z^2+11z-6=0\bigg\}=\{1,2,3\}\eeas and $\mathcal{S}_3=\{\infty\}$. Let $f(z)=h(z)+4$ and $g(z)=-h(z)$, where $ h(z) $ is a non-constant meromorphic function. Although we se that $E_{f}(\mathcal{S}_{1},0)=E_{g}(\mathcal{S}_{1},0)$, $E_{f}(\mathcal{S}_{2},3)=E_{g}(\mathcal{S}_{2},3)$ and $E_{f}(\mathcal{S}_{3},1)=E_{g}(\mathcal{S}_{3},1)$  but $f\not\equiv g$.
\end{exm}
\begin{exm}
	Let $\mathcal{S}_1=\bigg\{\displaystyle\frac{15-\sqrt{3}}{3}, \frac{15+\sqrt{3}}{3}\bigg\}$, \beas \mathcal{S}_2=\bigg\{z:z^3-15z^2+74z-120=0\bigg\}=\{4,5,6\}\eeas and $\mathcal{S}_3=\{\infty\}$. Let $f(z)=\phi(z)+10$ and $g(z)=-\phi(z)$, where $ \phi(z) $ is a non-constant meromorphic function. Although we se that $E_{f}(\mathcal{S}_{1},0)=E_{g}(\mathcal{S}_{1},0)$, $E_{f}(\mathcal{S}_{2},3)=E_{g}(\mathcal{S}_{2},3)$ and $E_{f}(\mathcal{S}_{3},1)=E_{g}(\mathcal{S}_{3},1)$  but $f\not\equiv g$.
\end{exm}
\begin{note}
	One can find many examples by considering $ \mathcal{S}_1 $ as th set of roots of the derivative of the polynomial of degree $ 3 $ whose roots formed the set $ \mathcal{S}_2 $, where $ \mathcal{S}_2=\{m, m+1, m+2\} $, where $ m\in\mathbb{N} $, and by choosing the functions $ f(z)=h(z)+2(m+1) $ and $ g(z)=-h(z) $, where $ h(z) $ is a non-constant meromorphic function. 
\end{note}
%--------------------------------------------------------------------------------------------------------------------%
\section{\sc Some lemmas}
In this section, we are going to discuss some lemmas which will be needed later to prove our main results. We define, for two non-constant meromorphic functions $f$ and $g$, \bea\label{e2.1}   \mathcal{F}=\frac{f^{n-1}(af+b)}{-c},\;\;\;\mathcal{G}=\frac{g^{n-1}(ag+b)}{-c}.  \eea\par Associated to $\mathcal{F}$ and $\mathcal{G}$, we next define $\mathcal{H}$ as follows: \bea\label{e2.2} \mathcal{H}=\left(\frac{\mathcal{F}^{\prime\prime}}{\mathcal{F}^{\prime}}-\frac{2\mathcal{F}^{\prime}}{\mathcal{F}-1}\right)-\left(\frac{\mathcal{G}^{\prime\prime}}{\mathcal{G}^{\prime}}-\frac{2\mathcal{G}^{\prime}}{\mathcal{G}-1}\right) \eea and \bea\label{e2.4}   \Psi=\frac{\mathcal{F}^{\prime}}{\mathcal{F}-1}-\frac{\mathcal{G}^{\prime}}{\mathcal{G}-1}.\eea 
\begin{lem}\label{lem2.1}\cite{Mok-1971} Let $ h $ be a non-constant meromorphic function and let \beas \mathcal{R}(h)=\displaystyle\frac{\displaystyle\sum_{i=1}^{n}a_ih^i}{\displaystyle\sum_{j=1}^{m}b_jh^j}, \eeas be an irreducible rational function in $ g $ with constant coefficients $\{a_i\} $, $ \{b_j\}$, where $ a_n\neq 0 $ and $ b_m\neq 0 $. Then \beas T(r,\mathcal{R}(h))=\max\{n,m\}\; T(r,h)+S(r,h). \eeas
\end{lem}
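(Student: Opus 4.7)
The plan is to establish the Mokhon'ko--Valiron identity $T(r, \mathcal{R}(h)) = d \, T(r, h) + O(1)$ with $d := \max\{n, m\}$ by proving the matching upper and lower bounds separately. Throughout, write $\mathcal{R}(w) = P(w)/Q(w)$ in lowest terms, let $\beta_1, \ldots, \beta_s$ denote the distinct poles of $\mathcal{R}$ in $\mathbb{C}$ with multiplicities $m_1, \ldots, m_s$ (so $\sum_j m_j = m$), and $\alpha_1, \ldots, \alpha_t$ the distinct zeros with multiplicities $n_1, \ldots, n_t$ (so $\sum_i n_i = n$).

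For the upper bound, I would first split off the polynomial part of $\mathcal{R}(w)$ by Euclidean division whenever $n \geq m$, and then apply partial fractions to the proper rational remainder, writing
\[
\mathcal{R}(w) = \sum_{k=0}^{n-m} c_k w^k + \sum_{j,\,\ell} \frac{c_{j,\ell}}{(w - \beta_j)^{\ell}}.
\]
Substituting $w = h$ and invoking the elementary characteristic-function estimates $T(r, h^k) \leq k \, T(r, h) + O(1)$ and $T(r, 1/(h - \beta_j)^{\ell}) \leq \ell \, T(r, h) + O(1)$, together with subadditivity of $T(r, \cdot)$, yields $T(r, \mathcal{R}(h)) \leq (n - m) T(r, h) + m \, T(r, h) + O(1) = n \, T(r, h) + O(1)$. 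The case $m > n$ is reduced to this one through the first-main-theorem identity $T(r, \mathcal{R}(h)) = T(r, 1/\mathcal{R}(h)) + O(1)$ and the symmetry obtained on swapping the roles of $P$ and $Q$.

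For the lower bound, I would pick $\alpha \in \mathbb{C}$ so that $\mathcal{R}(w) = \alpha$ has $d$ distinct simple roots $w_1, \ldots, w_d$, and so that no $w_i$ coincides with any pole $\beta_j$; all but finitely many $\alpha$ qualify. Writing $P(w) - \alpha\, Q(w) = c \prod_{i=1}^{d} (w - w_i)$, one has
\[
\mathcal{R}(h) - \alpha \;=\; \frac{c \prod_{i=1}^{d}(h - w_i)}{Q(h)},
\]
whose numerator and denominator are coprime, so the zeros of $\mathcal{R}(h) - \alpha$ match the combined zero sets of the $h - w_i$ with multiplicities. Hence $N(r, 1/(\mathcal{R}(h) - \alpha)) = \sum_{i=1}^{d} N(r, 1/(h - w_i))$, and applying the First Main Theorem to $\mathcal{R}(h)$ at $\alpha$ and to $h$ at each $w_i$ delivers
\[
T(r, \mathcal{R}(h)) \;\geq\; d \, T(r, h) \;-\; \sum_{i=1}^{d} m(r, 1/(h - w_i)) + O(1).
\]

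The main obstacle is to absorb the residual proximity sum $\sum_i m(r, 1/(h-w_i))$ into $S(r, h)$ uniformly in $r$ (outside an exceptional set of finite measure) rather than only on the level of deficiencies, since the defect bound $\sum_a \delta(a; h) \leq 2$ is not by itself strong enough. I would resolve this either by passing to the Ahlfors--Shimizu geometric form of the characteristic, where the identity becomes transparent because $\mathcal{R}$ is a degree-$d$ branched covering of the Riemann sphere and spherical area counted with multiplicity is multiplied by exactly $d$; or alternatively by applying Nevanlinna's Second Main Theorem to $h$ with two disjoint $d$-tuples of preimages corresponding to two distinct non-critical values of $\mathcal{R}$ and chaining the resulting inequalities with the already-established upper bound for $\mathcal{R}(h)$ at the second value, which forces the leftover proximity terms to be $S(r,h)$ and produces the tight identity.
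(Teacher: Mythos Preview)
The paper does not supply a proof of this lemma at all: it is quoted from Mokhon'ko \cite{Mok-1971} and used as a black box, so there is no in-paper argument to compare your sketch against.

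Your outline is largely on the right track. The upper bound via Euclidean division plus partial fractions and subadditivity of $T$ is standard and gives $T(r,\mathcal{R}(h))\le d\,T(r,h)+O(1)$ exactly as you say. The lower-bound setup through a generic fibre $\{w_1,\dots,w_d\}$ and the First Main Theorem is also correct, and you have isolated the genuine issue, namely controlling $\sum_i m\bigl(r,1/(h-w_i)\bigr)$.

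Two caveats on your proposed resolutions. First, in the Ahlfors--Shimizu picture it is \emph{not} literally true that ``spherical area counted with multiplicity is multiplied by exactly $d$'': composing with $\mathcal{R}$ pulls back the Fubini--Study form $\omega$ to $\mathcal{R}^{*}\omega$, which has total mass $d$ but is not $d\,\omega$ pointwise, so $A(t,\mathcal{R}(h))\neq d\,A(t,h)$ in general. What does hold is that $\int_{\mathbb{P}^1}(J_{\mathcal{R}}-d)\,d\sigma=0$ together with $\int_{\mathbb{P}^1} m(r,b,h)\,d\sigma(b)=O(1)$, and these combine to give $T_0(r,\mathcal{R}(h))=d\,T_0(r,h)+O(1)$; so the route is salvageable but not for the one-line reason you give. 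Second, your alternative via the Second Main Theorem and ``chaining'' two fibres with the upper bound is too vague: the natural combinations of those inequalities yield only trivially true statements and do not force $\sum_i m(r,1/(h-w_i))=S(r,h)$. Mokhon'ko's own argument sidesteps both detours by estimating $m(r,\mathcal{R}(h))$ and $N(r,\mathcal{R}(h))$ directly from the local behaviour of $\mathcal{R}$ near $\infty$ and near each finite pole $\beta_j$, which gives the sharper $O(1)$ error throughout without appealing to the Second Main Theorem.
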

\begin{lem}\label{lem2.2}
	Let  $\mathcal{F}$ and $\mathcal{G}$ be given by (\ref{e2.1}) satisfying  $E_{_{\mathcal{F}}}(1,q)=E_{_{\mathcal{G}}}(1,q)$, $0\leq q<\infty$ with $\mathcal{H}\not\equiv 0$, then \beas N_E^{1)}\left(r,\frac{1}{\mathcal{F}-1}\right)=N_E^{1)}\left(r,\frac{1}{\mathcal{G}-1}\right)&\leq& N(r,\mathcal{H})+S(r,\mathcal{F})+S(r,\mathcal{G}).\eeas 
\end{lem}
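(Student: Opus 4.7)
The plan is to exploit the classical observation that the auxiliary function $\mathcal{H}$ vanishes at every common simple $1$-point of $\mathcal{F}$ and $\mathcal{G}$, so that $N_E^{1)}$ is controlled by the zero-counting function of $\mathcal{H}$, which in turn is bounded by its pole counting function plus a small function term via the first fundamental theorem and Nevanlinna's logarithmic derivative lemma.

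First, I would observe that the equality $N_E^{1)}(r,1/(\mathcal{F}-1))=N_E^{1)}(r,1/(\mathcal{G}-1))$ is immediate from the hypothesis $E_{\mathcal{F}}(1,q)=E_{\mathcal{G}}(1,q)$ with $q\geq 0$: simple $1$-points on one side are precisely simple $1$-points on the other and are counted identically. Next, I would fix a point $z_0$ contributing to $N_E^{1)}(r,1/(\mathcal{F}-1))$ and expand locally
$$\mathcal{F}(z)=1+a_1(z-z_0)+a_2(z-z_0)^2+\cdots,\qquad a_1\neq 0,$$
with an analogous expansion for $\mathcal{G}$. A short computation gives
$$\frac{\mathcal{F}''}{\mathcal{F}'}=\frac{2a_2}{a_1}+O(z-z_0),\qquad \frac{2\mathcal{F}'}{\mathcal{F}-1}=\frac{2}{z-z_0}+\frac{2a_2}{a_1}+O(z-z_0),$$
so the $\mathcal{F}$-bracket in \eqref{e2.2} equals $-\frac{2}{z-z_0}+O(z-z_0)$, and the $\mathcal{G}$-bracket has the same shape. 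Subtracting, the pole term and the constant term both cancel, so $\mathcal{H}$ is holomorphic at $z_0$ with $\mathcal{H}(z_0)=0$.

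Since $\mathcal{H}\not\equiv 0$, every point counted in $N_E^{1)}(r,1/(\mathcal{F}-1))$ is a zero of $\mathcal{H}$, so
$$N_E^{1)}\!\left(r,\frac{1}{\mathcal{F}-1}\right)\leq \ol N\!\left(r,\frac{1}{\mathcal{H}}\right)\leq N\!\left(r,\frac{1}{\mathcal{H}}\right)\leq T(r,\mathcal{H})+O(1)=N(r,\mathcal{H})+m(r,\mathcal{H})+O(1).$$
Because $\mathcal{H}$ is a sum of logarithmic derivatives of $\mathcal{F}'$, $\mathcal{F}-1$, $\mathcal{G}'$ and $\mathcal{G}-1$, the Nevanlinna lemma on logarithmic derivatives yields $m(r,\mathcal{H})=S(r,\mathcal{F})+S(r,\mathcal{G})$, which completes the bound.

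The only real obstacle is the local Taylor computation proving $\mathcal{H}(z_0)=0$; one must be careful to track the constant terms in both $\mathcal{F}''/\mathcal{F}'$ and $2\mathcal{F}'/(\mathcal{F}-1)$, since it is precisely the coincidence of those constants (together with the obvious cancellation of the simple pole at $z_0$) that forces the vanishing of $\mathcal{H}$ there. Everything else is standard Nevanlinna machinery.
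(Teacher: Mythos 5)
Your proof is correct and takes essentially the same route as the paper's: every common simple $1$-point is a zero of $\mathcal{H}$, so $N_E^{1)}\left(r,\frac{1}{\mathcal{F}-1}\right)\leq N\left(r,\frac{1}{\mathcal{H}}\right)\leq N(r,\mathcal{H})+m(r,\mathcal{H})+O(1)$, with $m(r,\mathcal{H})=S(r,\mathcal{F})+S(r,\mathcal{G})$ from the logarithmic derivative lemma. The only difference is that you write out the local Taylor expansion showing $\mathcal{H}(z_0)=0$, which the paper simply declares to be clear.
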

\begin{proof} Since $E_{_{\mathcal{F}}}(1,q)=E_{_{\mathcal{G}}}(1,q)$. It is clear that any simple $1$-point of $\mathcal{F}$ and $\mathcal{G}$ is a zero of $\mathcal{H}$. From the construction of $\mathcal{H}$, we know that $m(r,\mathcal{H})=S(r,\mathcal{F})+S(r,\mathcal{G}).$ Therefore by \emph{First Fundamental Theorem}, we get \beas && N_E^{1)}\left(r,\frac{1}{\mathcal{F}-1}\right)=N_E^{1)}\left(r,\frac{1}{\mathcal{G}-1}\right)\\ &\leq& N\left(r,\frac{1}{\mathcal{H}}\right) \\&\leq& N(r,\mathcal{H})+S(r,\mathcal{F})+S(r,\mathcal{G}). \eeas	
\end{proof}
\begin{lem}\label{lem2.3}
	Let the set $\mathcal{S}_2$ be given as in \emph{Theorem \ref{t1.2}}\; and $\Psi$ is given by \emph{(\ref{e2.4})}. If $E_{f}(\mathcal{S}_2,n)=E_{g}(\mathcal{S}_2,n)$ and $E_{f}(\mathcal{S}_3,n-2)=E_{g}(\mathcal{S}_3,n-2)$ and $\Psi\not\equiv 0$, then \beas && \ol N\left(r,\frac{1}{f}\right)+ \ol N\left(r,\frac{1}{f-\delta_{a,b}^n}\right)\\&\leq& \ol N\left(r,\frac{1}{\mathcal{F}-1}\mid \geq n+1\right)+\ol N(r,f\mid\geq n-1)+S(r,f).\eeas 
\end{lem}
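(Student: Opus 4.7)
The plan is to analyse $\Psi$ as a logarithmic derivative whose proximity is controlled and whose pole structure is dictated by the sharing hypotheses, and then to convert these data into the desired count via the first main theorem.

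First, setting $P(w)=aw^n+bw^{n-1}+c$, note that $\mathcal{F}-1=-P(f)/c$ and $P'(w)=naw^{n-2}(w-\delta_{a,b}^n)$, so that
\begin{equation*}
\frac{\mathcal{F}'}{\mathcal{F}-1}=\frac{(P(f))'}{P(f)},\qquad \mathcal{F}'=-\frac{na}{c}\,f^{n-2}\bigl(f-\delta_{a,b}^n\bigr)f'.
\end{equation*}
The lemma on the logarithmic derivative, applied to both terms of $\Psi$, immediately yields $m(r,\Psi)=S(r,f)+S(r,g)$.

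Second, I would estimate $N(r,\Psi)$. Every pole of $\mathcal{F}'/(\mathcal{F}-1)$ is simple, with residue equal to the multiplicity at zeros of $\mathcal{F}-1$ (equivalently, at the $\mathcal{S}_2$-points of $f$, since $P$ has no repeated root) and to the negative multiplicity at poles of $\mathcal{F}$. The hypothesis $E_f(\mathcal{S}_2,n)=E_g(\mathcal{S}_2,n)$ matches multiplicities of $\mathcal{F}-1$ and $\mathcal{G}-1$ whenever either is at most $n$, making residues cancel in $\Psi$; surviving contributions arise only from zeros of $\mathcal{F}-1$ of multiplicity $\geq n+1$. An analogous argument, using $E_f(\mathcal{S}_3,n-2)=E_g(\mathcal{S}_3,n-2)$, isolates contributions from poles of $f$ only when the multiplicity is $\geq n-1$. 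Since all poles of $\Psi$ are simple, this gives
\begin{equation*}
N(r,\Psi)\leq \overline{N}\!\left(r,\frac{1}{\mathcal{F}-1}\,\Big|\,\geq n+1\right)+\overline{N}(r,f\mid\geq n-1).
\end{equation*}

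Third, I would exploit the factorisation of $\mathcal{F}'$ above: at a zero of $f$ of order $p$, $\mathcal{F}'/(\mathcal{F}-1)$ has a zero of order $(n-1)p-1\geq 1$; at a zero of $f-\delta_{a,b}^n$ of order $p$ (noting $\delta_{a,b}^n\notin\mathcal{S}_2$ by the exclusion of $c=-b(\delta_{a,b}^n)^{n-1}/n$), it has a zero of order $2p-1\geq 1$. A local comparison pairs each such point with a zero of $\Psi$, and then the first main theorem
\begin{equation*}
N(r,1/\Psi)\leq T(r,\Psi)+O(1)=N(r,\Psi)+m(r,\Psi)+O(1)
\end{equation*}
combined with the bounds of the first two steps delivers the inequality stated in the lemma.

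The principal obstacle lies in the third step: a zero of $f$ makes the $\mathcal{F}$-side of $\Psi$ vanish but not necessarily the $\mathcal{G}$-side, so it is not immediate that such a point is a zero of $\Psi$. Resolving this demands a careful local analysis interleaving the behaviour of the two logarithmic-derivative terms, likely invoking the auxiliary condition $c\neq -\frac{b}{2n}(\delta_{a,b}^n)^{n-1}$ from Theorem~\ref{t1.2} to exclude degenerate configurations and to ensure the contributions from zeros of $f$ and $f-\delta_{a,b}^n$ are faithfully captured by $N(r,1/\Psi)$.
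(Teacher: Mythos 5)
Your first two steps coincide with the paper's proof: $m(r,\Psi)=S(r,f)$ by the lemma on the logarithmic derivative, and the sharing of $(\mathcal{S}_2,n)$ and $(\mathcal{S}_3,n-2)$ cancels the (simple) poles of $\Psi$ except at zeros of $\mathcal{F}-1$ of multiplicity $\geq n+1$ and poles of $f$ of multiplicity $\geq n-1$, giving $N(r,\Psi)\leq \ol N\left(r,\frac{1}{\mathcal{F}-1}\mid\geq n+1\right)+\ol N(r,f\mid\geq n-1)+S(r,f)$. But the step you flag as the ``principal obstacle'' is exactly the point of the lemma, and your proposal leaves it genuinely open: from the factorization $\mathcal{F}'=-\frac{na}{c}f^{n-2}(f-\delta_{a,b}^n)f'$ you only get that the $\mathcal{F}$-term of $\Psi$ vanishes at a zero of $f$ or of $f-\delta_{a,b}^n$, and without further input nothing forces the $\mathcal{G}$-term to vanish there, so the inequality $\ol N\left(r,\frac{1}{f}\right)+\ol N\left(r,\frac{1}{f-\delta_{a,b}^n}\right)\leq N\left(r,\frac{1}{\Psi}\right)$ does not follow. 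Moreover, the condition $c\neq -\frac{b}{2n}\left(\delta_{a,b}^n\right)^{n-1}$, which you hope will rescue the argument, is irrelevant here (in the paper it is only used in Case 2 of the theorem to exclude $\mathcal{A}=\pm 1$); no amount of local analysis of the two logarithmic-derivative terms alone will close the gap.

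The missing ingredient is the sharing of the set $\mathcal{S}_1=\{0,\delta_{a,b}^n\}$: the paper's proof explicitly invokes $E_{f}(\mathcal{S}_1,0)=E_{g}(\mathcal{S}_1,0)$ (a hypothesis that, strictly speaking, is omitted from the lemma's statement but is present in Theorem \ref{t1.2}, where the lemma is applied). With it, any point $z_0$ with $f(z_0)\in\{0,\delta_{a,b}^n\}$ also satisfies $g(z_0)\in\{0,\delta_{a,b}^n\}$; since neither $0$ nor $\delta_{a,b}^n$ is a root of $az^n+bz^{n-1}+c$ (the value $0$ because $c\neq 0$, the value $\delta_{a,b}^n$ because the polynomial has no repeated root and $\delta_{a,b}^n$ is a zero of its derivative), both $\mathcal{F}(z_0)\neq 1$ and $\mathcal{G}(z_0)\neq 1$, and the factorizations of $\mathcal{F}'$ and $\mathcal{G}'$ show that both terms of $\Psi$ vanish at $z_0$ (here $n\geq 3$ is used). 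Hence $\Psi(z_0)=0$, which yields $\ol N\left(r,\frac{1}{f}\right)+\ol N\left(r,\frac{1}{f-\delta_{a,b}^n}\right)\leq N\left(r,\frac{1}{\Psi}\right)\leq T(r,\Psi)+O(1)=N(r,\Psi)+S(r,f)$, and the first two steps finish the proof. So your outline is the right skeleton, but as written it is incomplete at its decisive step, and the repair comes from the $\mathcal{S}_1$-sharing hypothesis rather than from the restriction on $c$.
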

\begin{proof}
Since $\Psi\not\equiv 0$, so in view of lemma of logarithmic derivatives, we have $m(r,\Psi)=S(r,f)$. Again since $E_{f}(\mathcal{S}_2,n)=E_{g}(\mathcal{S}_2,n)$ and $E_{f}(\mathcal{S}_3,n-2)=E_{g}(\mathcal{S}_3,n-2)$, then one can note that \bea\label{e2.9} N(r,\Psi)\leq \ol N\left(r,\frac{1}{\mathcal{F}-1}\mid\geq n+1\right)+\ol N(r,f\mid\geq n-1)+S(r,f). \eea\par Let $z_0$ be a point such that $f(z_0)=0$ or $f(z_0)=\delta_{a,b}^n$. Then since $E_{f}(\mathcal{S}_1,0)=E_{g}(\mathcal{S}_1,0)$, so we must have $\Psi(z_0)=0$. Thus we see that \bea\label{e2.10} \ol N\left(r,\frac{1}{f}\right)+\ol N\left(r,\frac{1}{f-\delta_{a,b}^n}\right)\leq N\left(r,\frac{1}{\Psi}\right).\eea\par Applying the \emph{First Fundamental Theorem}, we get from (\ref{e2.9}) and (\ref{e2.10}), \beas && \ol N\left(r,\frac{1}{f}\right)+\ol N\left(r,\frac{1}{f-\delta_{a,b}^n}\right)\\ &\leq& N\left(r,\frac{1}{\Psi}\right)\\ &\leq& T(r,\Psi)+S(r,f)\\&=& N(r,\Psi)+m(r,\Psi)+S(r,f)\\&=&N(r,\Psi)+S(r,f)\\ &\leq& \ol N\left(r,\frac{1}{\mathcal{F}-1}\mid\geq n+1\right)+\ol N(r,f\mid\geq n-1)+S(r,f).  \eeas
\end{proof}
\begin{lem}\label{lem2.4}\cite{Fang-1999}
	Let $a_1$, $a_2$, $a_3$, $a_4$ be four distinct complex numbers. If $E_{f}(a_j,\infty)=E_{g}(a_j,\infty)$, (j=1, 2, 3, 4), then $f(z)=\displaystyle\frac{\alpha\;g(z)+\beta}{\gamma\; g(z)+\delta}$, where $\alpha\delta-\beta\gamma\neq 0$.
\end{lem}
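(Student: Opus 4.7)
The statement is Nevanlinna's classical four-value theorem, in the strengthened form that four CM-shared values force $f$ to be a Möbius image of $g$. My plan is a three-step reduction, and I will not attempt to rebuild the full Nevanlinna machinery, only sketch where I would plug in its standard inputs.

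First, I would normalize by a Möbius transformation: choose $T$ sending $(a_1,a_2,a_3)\mapsto(0,1,\infty)$ and work with $\tilde f:=T\circ f$ and $\tilde g:=T\circ g$. The hypotheses are preserved with shared values $0,1,\infty,c$, where $c:=T(a_4)\notin\{0,1,\infty\}$. Proving the conclusion for $\tilde f,\tilde g$ suffices, since the Möbius group is closed under composition.

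Second, I would exploit the CM sharing of $\infty$ to build exponential auxiliaries. The functions
\[
\phi_0:=\frac{\tilde f}{\tilde g},\qquad \phi_1:=\frac{\tilde f-1}{\tilde g-1}
\]
are entire and nowhere-vanishing: zeros of each numerator cancel equal-order zeros of the denominator (CM sharing of $0$, resp.\ $1$), and all poles, at the common poles of $\tilde f$ and $\tilde g$, cancel by CM sharing of $\infty$. Hence $\phi_j=e^{P_j}$ for entire functions $P_0,P_1$, and solving the two identities $\tilde f=e^{P_0}\tilde g$ and $\tilde f-1=e^{P_1}(\tilde g-1)$ gives
\[
\tilde g=\frac{e^{P_1}-1}{e^{P_1}-e^{P_0}},\qquad \tilde f=\frac{e^{P_0}(e^{P_1}-1)}{e^{P_1}-e^{P_0}}.
\]

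Third, the same reasoning applied to the fourth sharing yields $(\tilde f-c)/(\tilde g-c)=e^{P_2}$ for some entire $P_2$. Substituting the closed forms above and clearing denominators produces the exponential identity
\[
e^{P_0+P_1}+(c-1)e^{P_1+P_2}+(c-1)e^{P_0}+e^{P_2}-c\,e^{P_1}-c\,e^{P_0+P_2}\equiv 0,
\]
whose six coefficients are all nonzero since $c\notin\{0,1\}$. Borel's lemma on linear combinations of exponentials then forces linear dependencies among these six exponents that collapse iteratively to make $P_0,P_1,P_2$ constant. Once $P_0$ is a constant, $\tilde f=e^{P_0}\tilde g$ exhibits $\tilde f$ as a scalar Möbius image of $\tilde g$, and undoing $T$ delivers the stated Möbius relation between $f$ and $g$.

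The main obstacle is the final Borel step: one must case-analyse the possible pairings of the six exponents (those yielding $P_0$, $P_1$, $P_2$, or linear combinations thereof constant) and verify in each case that the sub-identity forced by Borel, together with the non-degeneracy provided by $c\notin\{0,1,\infty\}$, eventually collapses all three $P_j$ to constants. This is precisely the point where four shared values become essential: three values would yield only two exponentials and no non-trivial Borel identity, whereas four produces a genuine rigidity constraint.
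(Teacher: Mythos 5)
The paper offers no proof of this lemma at all; it simply cites Fang's paper, the statement being the classical Nevanlinna four-CM-value theorem. So your argument has to stand on its own, and it does so up to, but not including, the final step. The reductions are fine: the quotients $\tilde f/\tilde g$, $(\tilde f-1)/(\tilde g-1)$, $(\tilde f-c)/(\tilde g-c)$ are indeed zero-free entire functions by CM sharing, the closed forms for $\tilde f,\tilde g$ are correct (modulo the harmless degenerate case $e^{P_0}\equiv e^{P_1}$, which forces $e^{P_0}\equiv 1$ and $\tilde f\equiv\tilde g$), and your six-term exponential identity is the right one. The genuine gap is the claim that Borel's theorem collapses $P_0,P_1,P_2$ to constants. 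That claim is false. Take $f=e^{z}$, $g=e^{-z}$: these share $0,1,\infty,-1$ CM, so after your normalization $c=-1$, and one computes $P_0=2z$, $P_1=z+i\pi$, $P_2=z$, all nonconstant; the six exponents group as $\{P_0+P_1,\,P_0+P_2\}$, $\{P_1+P_2,\,P_0\}$, $\{P_2,\,P_1\}$ with vanishing subsums, so the Borel identity is satisfied without any $P_j$ being constant. The conclusion of the lemma holds in this example only through $f\equiv 1/g$, not through any relation $\tilde f=\lambda\tilde g$. Put differently, your intended endgame ``$P_0$ constant, hence $\tilde f$ is a scalar multiple of $\tilde g$'' is strictly weaker than what must be proved: after normalization it produces only diagonal M\"obius maps fixing $0$ and $\infty$, whereas the theorem must also cover pairs related by M\"obius maps that permute the shared values.

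The repair is to run the Borel case analysis with the correct target. Each admissible partition of the six exponents into classes with constant differences yields either relations forcing $\tilde f\equiv\tilde g$, or relations of the shape $e^{P_0+P_1}=\mathrm{const}\cdot e^{P_2}$ and the like, which translate into identities such as $\tilde f\tilde g\equiv\mathrm{const}$, $(\tilde f-1)(\tilde g-1)\equiv\mathrm{const}$, $(\tilde f-c)(\tilde g-c)\equiv\mathrm{const}$, i.e.\ into \emph{some} M\"obius relation between $\tilde f$ and $\tilde g$ (with the cross-ratio condition, e.g.\ $c=-1$, $c=2$ or $c=1/2$, emerging as a consistency constraint rather than a contradiction). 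As written, your proof would reach a dead end exactly in those cases, so the final step needs to be reworked before the argument is complete.
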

\begin{lem}\label{lem2.5}\cite{Fang-1999}
	If $E_{f^{*}}(1,\infty)=E_{g^{*}}(1,\infty)$ with $\delta_{2}(0;f^{*})+\delta_{2}(0;g^{*})+\delta_{2}(\infty,f^{*})+\delta_{2}(\infty,g^{*})>3$, then either $f^{*}g^{*}\equiv 1$ or $f^{*}\equiv g^{*}.$
\end{lem}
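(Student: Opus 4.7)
The plan is to apply the classical auxiliary-function technique attached to the CM-shared value $1$: set
\[
H = \left(\frac{f^{*\prime\prime}}{f^{*\prime}} - \frac{2f^{*\prime}}{f^{*} - 1}\right) - \left(\frac{g^{*\prime\prime}}{g^{*\prime}} - \frac{2g^{*\prime}}{g^{*} - 1}\right),
\]
and split the argument into the two cases $H \not\equiv 0$ and $H \equiv 0$. In the first case I would derive a second-main-theorem-type inequality that directly contradicts the deficiency hypothesis, and in the second case I would integrate twice to obtain a M\"obius relation between $f^{*}$ and $g^{*}$ and then use Nevanlinna's deficiency inequality to reduce to the two conclusions $f^{*} \equiv g^{*}$ and $f^{*} g^{*} \equiv 1$.

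Suppose first $H \not\equiv 0$. Since the $1$-points are shared CM, every simple common $1$-point of $f^{*}$ and $g^{*}$ is a zero of $H$, and a short residue computation shows that the contributions at common $1$-points cancel in $H$, so
\[
N(r, H) \leq \ol N_{(2}(r, 1/f^{*}) + \ol N_{(2}(r, 1/g^{*}) + \ol N_{(2}(r, f^{*}) + \ol N_{(2}(r, g^{*}) + S(r).
\]
Combined with $m(r, H) = S(r, f^{*}) + S(r, g^{*})$ from the logarithmic-derivative lemma, this bounds $N_{E}^{1)}(r, 1/(f^{*}-1))$. Applying the second fundamental theorem to $f^{*}$ and to $g^{*}$ at $\{0, 1, \infty\}$, adding the two inequalities, splitting the $1$-point counts into simple and multiple parts, and absorbing the multiple-$1$-point terms via the identity $N_{2}(r, 1/h) = \ol N(r, 1/h) + \ol N_{(2}(r, 1/h)$, a standard rearrangement yields
\[
T(r, f^{*}) + T(r, g^{*}) \leq N_{2}(r, 1/f^{*}) + N_{2}(r, 1/g^{*}) + N_{2}(r, f^{*}) + N_{2}(r, g^{*}) + S(r).
\]
Dividing by $T(r, f^{*}) + T(r, g^{*})$ and taking $\limsup$ translates this into $\delta_{2}(0; f^{*}) + \delta_{2}(0; g^{*}) + \delta_{2}(\infty; f^{*}) + \delta_{2}(\infty; g^{*}) \leq 3$, contradicting the hypothesis; hence $H \equiv 0$.

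Integrating $H \equiv 0$ twice gives $(f^{*} - 1)^{-1} \equiv A(g^{*} - 1)^{-1} + B$ with $A \in \mathbb{C}^{*}$, $B \in \mathbb{C}$, equivalently $f^{*} = T(g^{*})$ for the M\"obius $T(w) = ((B+1)w + (A-B-1))/(Bw + (A-B))$. I would then rewrite the hypothesis purely in terms of $g^{*}$-deficiencies via $\delta_{2}(0; f^{*}) = \delta_{2}(T^{-1}(0); g^{*})$ and $\delta_{2}(\infty; f^{*}) = \delta_{2}(T^{-1}(\infty); g^{*})$. If the four $g^{*}$-values $0$, $\infty$, $T^{-1}(0)$, $T^{-1}(\infty)$ are distinct, Nevanlinna's inequality $\sum_{a} \delta_{2}(a; g^{*}) \leq \sum_{a} \Theta(a; g^{*}) \leq 2$ already forces the sum to be at most $2$, contradicting the hypothesis. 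So one of the coincidences $T(0) = 0$, $T(\infty) = \infty$, $T(0) = \infty$, $T(\infty) = 0$ must occur. Enumerating these degenerate configurations one by one and tracking the doubled $\delta_{2}$-contributions that arise when the values collapse, one checks that Nevanlinna still yields $S \leq 3$ except precisely when $(A, B) = (1, 0)$ -- which recovers $f^{*} \equiv g^{*}$ -- or $(A, B) = (-1, -1)$ -- which gives $T(w) = 1/w$ and hence $f^{*} g^{*} \equiv 1$. The most delicate step will be this final case analysis, because in the partially degenerate sub-cases some $\delta_{2}$ value appears with multiplicity two in the rewritten $S$, and one must use $\delta_{2} \leq 1$ together with the exact form of $T$ to squeeze out the contradiction in every sub-case.
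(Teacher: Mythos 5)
The paper never proves this lemma --- it is imported verbatim from Fang \cite{Fang-1999} --- so there is no internal argument to compare against; what you propose is the standard auxiliary-function proof of this classical result, and its architecture (the dichotomy $H\not\equiv 0$ versus $H\equiv 0$, the M\"obius relation obtained by integrating twice, and the defect-relation case analysis over the coincidences $T^{-1}(0),T^{-1}(\infty)\in\{0,\infty\}$) is the right one. Your Case 2 is correct in outline: when the four values $0,\infty,T^{-1}(0),T^{-1}(\infty)$ are distinct, or collapse only partially, the relation $\delta_2\leq\Theta$ and the defect relation give a $\delta_2$-sum at most $3$, contradicting the hypothesis; and the two full collapses within this family force exactly $(A,B)=(1,0)$ or $(A,B)=(-1,-1)$ (the multiplier one would expect in $T(w)=cw$ or $T(w)=c/w$ is pinned to $c=1$ because the integrated relation also forces $T(1)=1$), giving $f^{*}\equiv g^{*}$ or $f^{*}g^{*}\equiv 1$.

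Case 1 as written, however, contains a step that does not hold. (i) Your bound for $N(r,H)$ omits the counting functions $N_{0}(r,1/f^{*\prime})$ and $N_{0}(r,1/g^{*\prime})$ of zeros of the derivatives away from the $0$- and $1$-points; these must be carried along and then cancelled against the ramification terms of the second fundamental theorem. (ii) The ``standard rearrangement'' yields $T(r,h)\leq N_2(r,1/f^{*})+N_2(r,1/g^{*})+N_2(r,f^{*})+N_2(r,g^{*})+S(r)$ for each $h\in\{f^{*},g^{*}\}$ \emph{separately} (one absorbs $\ol N_{(2}\left(r,\frac{1}{f^{*}-1}\right)+\ol N\left(r,\frac{1}{g^{*}-1}\right)\leq N\left(r,\frac{1}{g^{*}-1}\right)\leq T(r,g^{*})+O(1)$, which consumes one of the two characteristic functions), not the stronger sum form $T(r,f^{*})+T(r,g^{*})\leq$ the same right-hand side that you assert; the sum form is not needed and I do not see how to obtain it. (iii) The deduction ``divide by $T(r,f^{*})+T(r,g^{*})$ and take $\limsup$ to get $\delta_{2}(0;f^{*})+\delta_{2}(0;g^{*})+\delta_{2}(\infty;f^{*})+\delta_{2}(\infty;g^{*})\leq 3$'' is not valid: the deficiencies of $f^{*}$ and of $g^{*}$ are normalised by different characteristic functions, which may grow at different rates, so the $\limsup$ of a mixed quotient does not split into the four deficiencies. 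The standard repair is short: set $\alpha=2-\delta_{2}(0;f^{*})-\delta_{2}(\infty;f^{*})$ and $\beta=2-\delta_{2}(0;g^{*})-\delta_{2}(\infty;g^{*})$, so the hypothesis reads $\alpha+\beta<1$; the per-function inequality then gives, for every $\varepsilon>0$ and all large $r$ outside a set of finite measure, $T(r,h)\leq(\alpha+\varepsilon)T(r,f^{*})+(\beta+\varepsilon)T(r,g^{*})+S(r)$ for $h=f^{*},g^{*}$, hence $T(r)\leq(\alpha+\beta+2\varepsilon)T(r)+S(r)$ with $T(r)=\max\{T(r,f^{*}),T(r,g^{*})\}$, a contradiction. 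With these corrections your argument goes through.
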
		
%-------------------------------------------------------------------------------------------------------------------------%
\section{\sc Proof of the theorem}

\begin{proof}[Proof of Theorem \ref{t1.2}] Let $\mathcal{F}$ and $\mathcal{G}$ be given by (\ref{e2.1}) and $\mathcal{H}$, by (\ref{e2.2}).  We now discuss the following cases.\\
	\noindent{\sc Case 1.} Let if possible $\mathcal{H}\not\equiv 0$. Therefore it is clear that $\mathcal{F}\not\equiv\mathcal{G}$ and hence $\Psi\not\equiv 0$. By the lemma of logarithmic derivatives, one can easily get that $m(r,\mathcal{H})=S(r,f)+S(r,g)=m(r,\Psi)$. Since $E_{f}(\mathcal{S}_1,0)=E_{g}(\mathcal{S}_1,0)$, $E_{f}(\mathcal{S}_2,n)=E_{g}(\mathcal{S}_2,n)$ and $E_{f}(\mathcal{S}_3,n-2)=E_{g}(\mathcal{S}_3,n-2)$ hence from the construction of $\mathcal{H}$, one can easily get that  \bea\label{e3.1} && N(r,\mathcal{H})\\&\leq& \nonumber\ol N\left(r,\frac{1}{\mathcal{F}-1}\mid\geq n+1\right)+\ol N(r,f\mid\geq n-1)+\ol N\left(r,\frac{1}{f}\right)+\ol N\left(r,\frac{1}{f-\delta_{a,b}^n}\right)\\ &&\nonumber+ N_{0}\left(r,\frac{1}{f^{\prime}}\right)+ N_{0}\left(r,\frac{1}{g^{\prime}}\right)+S(r,f)+S(r,g)\nonumber,\eea\par where $N_{0}\left(r,\displaystyle\frac{1}{f^{\prime}}\right)$ denote the counting function of those zeros of $f^{\prime}$ which are not the zeros of $f(f-\delta_{a,b}^n)(\mathcal{F}-1)$. Similarly, $N_{0}\left(r,\displaystyle\frac{1}{g^{\prime}}\right)$ can be defined.\par 
	By applying \emph{Second Fundamental Theorem}, we get \bea\label{e3.2} && (n+1)\bigg\{T(r,f)+T(r,g)\bigg\}\\ &\leq&\nonumber \ol N\left(r,\frac{1}{\mathcal{F}-1}\right)+\ol N(r,f)+\ol N\left(r,\frac{1}{f}\right)+\ol N\left(r,\frac{1}{f-\delta_{a,b}^n}\right)+\ol N\left(r,\frac{1}{\mathcal{G}-1}\right)\\ && \nonumber+\ol N(r,g)+\ol N\left(r,\frac{1}{g}\right)+\ol N\left(r,\frac{1}{g-\delta_{a,b}^n}\right)-N_{0}\left(r,\frac{1}{f^{\prime}}\right)-N_{0}\left(r,\frac{1}{g^{\prime}}\right)\\ &&+S(r,f)+S(r,g).\nonumber \eea \par 
	Now by using \emph{\sc Lemmas \ref{lem2.2}, \ref{lem2.3}}\; and (\ref{e3.1}), we get from (\ref{e3.2}) 
	\bea\label{e3.3} && (n+1)\bigg\{T(r,f)+T(r,g)\bigg\}\\ &\leq&\nonumber \ol N\left(r,\frac{1}{\mathcal{F}-1}\mid\geq n+1\right)+N_{n-1}(r,f)+\ol N\left(r,\frac{1}{\mathcal{G}-1}\right)+\ol N\left(r,\frac{1}{\mathcal{F}-1}\mid\geq 2\right)\\ &&\nonumber \ol N(r,g)+3\bigg\{\ol N(r,f\mid\geq n-1)+\ol N\left(r,\frac{1}{\mathcal{F}-1}\mid\geq n+1\right)\bigg\}+S(r,f)+S(r,g)\\ &\leq&\nonumber N_{n-1}(r,f)+N_{n-1}(r,g)+\frac{1}{n-1} N(r,f)+\frac{1}{n-1} N(r,g)\\ &&\nonumber+\bigg\{2\ol N\left(r,\frac{1}{\mathcal{F}-1}\mid\geq n+1\right)+\ol N\left(r,\frac{1}{\mathcal{F}-1}\mid\geq 2\right)\bigg\}\\ &&\nonumber+\bigg\{\ol N\left(r,\frac{1}{\mathcal{G}-1}\right)+2\ol N\left(r,\frac{1}{\mathcal{F}-1}\mid\geq n+1\right)\bigg\}+S(r,f)+S(r,g). \eea\par Next, we see that \bea\label{e3.4} && \frac{1}{2}\ol N\left(r,\frac{1}{\mathcal{F}}\mid\leq 1\right)+\ol N\left(r,\frac{1}{\mathcal{F}}\mid\geq 2\right)+2\ol  N\left(r,\frac{1}{\mathcal{F}}\mid\geq n+1\right)\\ &\leq&\nonumber \frac{1}{2} N\left(r,\frac{1}{\mathcal{F}-1}\right)+\ol N\left(r,\frac{1}{\mathcal{F}-1}\geq n+1\right)\\ &\leq&\nonumber \left(\frac{1}{2}+\frac{1}{n+1}\right) N\left(r,\frac{1}{\mathcal{F}-1}\right)=\frac{n+3}{2(n+1)}N\left(r,\frac{1}{\mathcal{F}-1}\right). \eea\par Similarly, we get \bea\label{e3.5} && \frac{1}{2}\ol N\left(r,\frac{1}{\mathcal{G}}\mid\leq 1\right)+\ol N\left(r,\frac{1}{\mathcal{G}}\mid\geq 2\right)+2 \ol N\left(r,\frac{1}{\mathcal{G}}\mid\geq n+1\right)\\ &\leq&\nonumber \frac{n+3}{2(n+1)}N\left(r,\frac{1}{\mathcal{G}-1}\right).\eea\par Therefore, using (\ref{e3.4}) and (\ref{e3.5}), we obtain from (\ref{e3.3}) \beas && (n+1)\bigg\{T(r,f)+T(r,g)\bigg\}\\ &\leq& \left(1+\frac{1}{n-1}+\frac{n(n+3)}{2(n+1)}\right)\bigg\{T(r,f)+T(r,g)\bigg\},\eeas which contradicts $n\geq 3$.\\
	 \noindent{\sc Case 2.} Therefore $\mathcal{H}\equiv 0$.\par Then on integrating, we get from (\ref{e2.2}) \bea\label{e3.6} \frac{1}{\mathcal{F}-1}\equiv\frac{\mathcal{A}}{\mathcal{G}-1}+\mathcal{B},\;\;\text{where}\;\; \mathcal{A}(\neq 0),\mathcal{B}\in\mathbb{C}.\eea\par From (\ref{e3.6}), we obtain in view Lemma \ref{lem2.1} that \beas T(r,f)=T(r,g)+S(r,f)+S(r,g).\eeas \par Let $ \infty $ is a e.v.P of $ f $. Then we must have $ \ol N(r,f)=S(r,f) $.\par From the proof of Lemma \ref{lem2.3}, we have already \bea\label{e33.77} && \ol N\left(r,\frac{1}{f}\right)+\ol N\left(r,\frac{1}{f-\delta_{a,b}^n}\right)\\ &\leq& \nonumber \ol N\left(r,\frac{1}{\mathcal{F}-1}\mid\geq n+1\right)+\ol N(r,f\mid\geq n-1)+S(r,f)\\&\leq&\nonumber \frac{1}{n+1} N\left(r,\frac{1}{\mathcal{F}-1}\right)+\frac{1}{n-1}\ol N(r,f)+S(r,f)\\&\leq&\frac{n}{n+1} T(r,f)+S(r,f)\nonumber. \eea\par By the \emph{Second Fundamental Theorem} and (\ref{e33.77}), we obtain \beas T(r,f)&\leq& \ol N\left(r,\frac{1}{f}\right)+\ol N\left(r,\frac{1}{f-\delta^n_{a,b}}\right)+\ol N(r,f)+S(r,f)\\&\leq& \frac{n}{n+1} T(r,f)+S(r,f),\eeas which is a contradiction.\par 
	 	 
	 Let $\infty$ is not a \textit{e.v.P} of $f$. So there must exits $z_0\in\mathbb{C}$ such that $f(z_0)=\infty$. Since $E_{f}(\mathcal{S}_3,n-2)=E_{g}(\mathcal{S}_3,n-2)$, so get from (\ref{e3.6}) that $\mathcal{B}=0$.\par Therefore, we have \beas  \mathcal{A}(\mathcal{F}-1)\equiv (\mathcal{G}-1). \eeas\par i.e., we have \bea\label{ee3.2} \mathcal{A}(af^n+bf^{n-1}+c)\equiv (ag^n+bg^{n-1}+c).  \eea\par 
Since $E_{f}(\mathcal{S}_1,0)=E_{g}(\mathcal{S}_1,0)$, then we have the following two possibilities. \begin{enumerate} 
	 	\item[(i)] $ E_{f}(0,0)=E_{g}(0,0)\;\;\text{and}\;\; E_{f}(\delta_{a,b}^n,0)=E_{g}(\delta_{a,b}^n,0),$ or
	 	\item[(ii)] $ E_{f}(0,0)=E_{g}(\delta_{a,b}^n,0)\;\;\text{and}\;\; E_{f}(\delta_{a,b}^n,0)=E_{g}(0,0).$
	 \end{enumerate}
	 \noindent{\sc Subcase 2.1.} Suppose $ E_{f}(0,0)=E_{g}(0,0)\;\;\text{and}\;\; E_{f}(\delta_{a,b}^n,0)=E_{g}(\delta_{a,b}^n,0).$ Then there exist $z_0, z_1\in\mathbb{C}$ such that $f(z_0)=0=g(z_0)$ and $f(z_1)=\delta_{a,b}^n=g(z_1)$. In both the cases, we get from (\ref{ee3.2}) that $\mathcal{A}=1$. Then (\ref{ee3.2}) reduces to \bea\label{ee3.3} af^n+bf^{n-1}\equiv ag^n+bg^{n-1}.\;\;\text{i.e.,}\;\; f^{n-1}(af+b)\equiv g^{n-1}(ag+b)\eea\par Since $E_{f}(0,0)=E_{g}(0,0)$, so from (\ref{ee3.3}), we get $E_{f}\left(-\displaystyle\frac{b}{a},0\right)=E_{g}\left(-\displaystyle\frac{b}{a},0\right)$. Again since $E_{f}(\mathcal{S}_3,n-2)=E_{g}(\mathcal{S}_3,n-2)$, thus we see that \beas E_{f}(0,0)=E_{g}(0,0),\;\;\; E_{f}\left(\displaystyle\delta_{a,b}^n,0\right)=E_{g}\left(\displaystyle\delta_{a,b}^n,0\right),\\ E_{f}\left(-\frac{b}{a},0\right)=E_{g}\left(-\frac{b}{a},0\right),\;\;\;\; E_{f}(\infty,n-2)=E_{g}(\infty,n-2).\eeas\par Then by \emph{\sc Lemma \ref{lem2.4}}, one must have \bea\label{ee3.4} f(z)=\displaystyle\frac{\alpha\;g(z)+\beta}{\gamma\; g(z)+\delta},\eea\par where $\alpha\delta-\beta\gamma\neq 0$.\par Therefore, equations (\ref{ee3.3}) and (\ref{ee3.4}) combinedly give $f\equiv g$.\\ 
	 \noindent{\sc Subcase 2.2.} Suppose $ E_{f}(0,0)=E_{g}(\delta_{a,b}^n,0)\;\;\text{and}\;\; E_{f}(\delta_{a,b}^n,0)=E_{g}(0,0).$\par We now discuss the following subcases.\\
	 \noindent{\sc Subcase 2.2.1.} Let both $E_{f}(0,0)=E_{g}(\delta_{a,b}^n,0)=\phi$ and $E_{f}(\delta_{a,b}^n,0)=E_{g}(0,0)=\phi$. Since $E_{f}(\infty,n-2)=E_{g}(\infty,n-2)$, so we must have $E_{f^{*}}(1,n-2)=E_{g^{*}}(1,n-2)$, where $f^{*}(z)=\displaystyle\frac{f(z)}{f(z)-\delta_{a,b}^n}\neq 0, \infty$ and $g^{*}(z)=\displaystyle\frac{g(z)-\delta_{a,b}^n}{g(z)}\neq 0, \infty$. Again we note that \beas\delta_{2}(0;f^{*})+\delta_{2}(0;g^{*})+\delta_{2}(\infty,f^{*})+\delta_{2}(\infty,g^{*})=4>3.\eeas\par Therefore, by using \emph{\sc Lemma  \ref{lem2.5}}, we have $f^{*}\equiv g^{*}$ or $f^{*}g^{*}\equiv 1$.\\
	 \noindent{\sc Subcase 2.2.1.1.} Suppose $f^{*}g^{*}\equiv 1$. Then we have $f\equiv g$.\\
	 \noindent{\sc Subcase 2.2.1.2.} Suppose $f^{*}\equiv g^{*}$. Then we have \bea\label{ee3.5} f+g=\delta_{a,b}^n.\eea\par Thus from (\ref{ee3.2}) and (\ref{ee3.5}), we see that $f$ is a constant, which is absurd.\\
	 \noindent{\sc Subcase 2.2.2.} Let $E_{f}(0,0)=E_{g}(\delta_{a,b}^n,0)=\phi$ or $E_{f}(\delta_{a,b}^n,0)=E_{g}(0,0)=\phi$.\\
	 \noindent{\sc Subcase 2.2.2.1.} Suppose $E_{f}(0,0)=E_{g}(\delta_{a,b}^n,0)=\phi$ and $E_{f}(\delta_{a,b}^n,0)=E_{g}(0,0)\neq\phi$. This implies that there exists $z_0\in\mathbb{C}$, such that $f(z_0)=\delta_{a,b}^n$ and $g(z_0)=0$. So  from (\ref{ee3.2}), we get \bea\label{ee3.6}  \mathcal{A}=\frac{a\left(\delta_{a,b}^n\right)^n+b\left(\delta_{a,b}^n\right)^{n-1}+c}{c}.\eea\par 
	 It follows from (\ref{ee3.6}) that \bea\label{e3.13}- a\left(\delta_{a,b}^n\right)^n-b\left(\delta_{a,b}^n\right)=c\left(1-\mathcal{A}\right). \eea Clearly, one root of the equation (\ref{e3.13}) is $ \delta^n_{a,b} $ of multiplicity $ 2 $. Equation (\ref{ee3.2}) can be written as \bea\label{e3.14} af^n+bf^{n-1}+c-\frac{c}{\mathcal{A}}=\frac{1}{\mathcal{A}}\left(ag^n+bg^{n-1}\right).\eea We must have $ c-\frac{c}{\mathcal{A}}\neq c\left(1-\mathcal{A}\right), $ otherwise we will have $ \mathcal{A}=\pm 1 $, which is a contradiction as $c\neq\displaystyle -\frac{b}{2n}\left(\delta_{a,b}^n\right)^{n-1}$, $ \delta^n_{a,b}\neq -\frac{b}{a},\; 0 $.\par Now, equation (\ref{e3.14}) can be written as \bea\label{e3.15} a\prod_{j=1}^{n}\left(f-\zeta_j\right)=\frac{1}{\mathcal{A}}g^{n-1}(ag+b), \eea where $ \zeta_j\; (j=1, 2, \ldots, n) $ are distinct roots of the polynomial $\displaystyle af^n+bf^{n-1}+c-\frac{c}{\mathcal{A}}. $ From (\ref{e3.15}), it is clear that $ 0 $ is e.v.P of $ g $, which contradicts our assumption  $E_{f}(\delta_{a,b}^n,0)=E_{g}(0,0)\neq\phi$.
	 \noindent{\sc Subcase 2.2.2.2.} Suppose $E_{f}(0,0)=E_{g}(\delta_{a,b}^n,0)\neq\phi$ and $E_{f}(\delta_{a,b}^n,0)=E_{g}(0,0)=\phi$. This implies that there exists $z_1\in\mathbb{C}$, such that $f(z_1)=0$ and $g(z_1)=\delta_{a,b}^n$. Then from (\ref{ee3.2}), we get \bea\label{ee3.9}  \mathcal{A}=\frac{c}{a\left(\delta_{a,b}^n\right)^n+b\left(\delta_{a,b}^n\right)^{n-1}+c}.\eea\par Next proceeding exactly same way as done in the \emph{\sc Subcase 2.2.2.1}, we get a contradiction.\\
	 \noindent{\sc Subcase 2.2.3.} Suppose both $E_{f}(0,0)=E_{g}(\delta_{a,b}^n,0)\neq\phi$ and $E_{f}(\delta_{a,b}^n,0)=E_{g}(0,0)\neq\phi$. Then we get \beas  \mathcal{A}=\frac{a\left(\delta_{a,b}^n\right)^n+b\left(\delta_{a,b}^n\right)^{n-1}+c}{c}\;\;\text{and}\;\;  \mathcal{A}=\frac{c}{a\left(\delta_{a,b}^n\right)^n+b\left(\delta_{a,b}^n\right)^{n-1}+c}.\eeas\par Thus we see that $\mathcal{A}=\pm 1$, which contradicts $c\neq\displaystyle -\frac{b}{2n}\left(\delta_{a,b}^n\right)^{n-1}.$\par This completes the proof.
\end{proof}

%______________________________________________________%
\section{\sc Concluding remarks and a question}
\par
In this paper, we proved a result with the best possible cardinalities of the three sets sharing problems till now by answering the question posed by \emph{Yi} \cite{Yi-SC-1994} without the help of any extra suppositions. We have also abled to relax the nature of sharing of the sets compare to other results mentioned in the introduction. But we don't know whether the choice of the weights $(k_1,k_2,k_3)=(0,3,1)$ associated with the corresponding sets $ \mathcal{S}_j $, $ j=1, 2, 3 $, in our main result is the best possible or not. So we have the following quarry for the future investigation in this direction.
\begin{ques}
	Keeping all other conditions intact in \emph{Theorem \ref{t1.2}}, is it possible to relax the nature of sharing of the sets further ?
\end{ques}
\noindent{\bf Acknowledgment} The author would like to thank the referee for his/her helpful suggestions
and comments towards the improvement of this manuscript.

\end{document}